\providecommand{\abs}[1]{\left\lvert#1\right\rvert}
 \DeclareMathOperator{\tor}{Tor}
\DeclareMathOperator{\Span}{Span}
\DeclareMathOperator{\Per}{Per}
\DeclareMathOperator{\Res}{Res}
 \DeclareMathOperator{\Pre}{Preper}
\DeclareMathOperator{\hhat}{\hat{h}}
\newcommand{\col}{\,{:}\,}
\newcommand{\tth}{^{\operatorname{th}}}
\theoremstyle{plain}
\newtheorem{thm}{Theorem}
\newtheorem*{thm*}{Theorem}
\newtheorem{lem}{Lemma}
\newtheorem{prop}{Proposition}
\newtheorem*{prop*}{Proposition}
\newtheorem{cor}{Corollary}
\theoremstyle{definition}
\newtheorem{defn}{Definition}
\newtheorem{conj}{Conjecture}
\newtheorem*{conj*}{Conjecture}
\newtheorem*{conj1a}{Conjecture 1a}
\newtheorem{exmp}{Example}
\newtheorem*{exmp*}{Example}
\newtheorem*{rem}{Remark}
\theoremstyle{remark}
\def\Z{\mathbb{Z}}
\def\Q{\mathbb{Q}}
\def\P{\mathbb{P}}
\def\A{\mathbb{A}}
\def\N{\mathbb{N}}
\def\O{\mathcal{O}}
\newcommand{\etalchar}[1]{$^{#1}$}
\providecommand{\bysame}{\leavevmode\hbox to3em{\hrulefill}\thinspace}
\providecommand{\MR}{\relax\ifhmode\unskip\space\fi MR }
\providecommand{\href}[2]{#2}
\begin{document}
    \title{Determination of all rational preperiodic points for morphisms of PN}
    \author{Benjamin Hutz}
    \address{Department of Mathematical Sciences \\
        Florida Institute of Technology \\
        150 W. University Blvd\\
        Melbourne, FL 32901 \\
        USA}
    \email{bhutz@fit.edu}

    \thanks{The author thanks ICERM, where much of this work was completed, and ICERM and the Brown Center for Computing and Visualization for computation time. The current state of the arithmetic dynamics project for Sage, including the implementation of the algorithm discussed in this article, can be found at \url{http://wiki.sagemath.org/dynamics/ArithmeticAndComplex}.}

\subjclass[2010]{
37P05, 
37P15  
(primary);
37P45, 
37-04  
(secondary)}

\keywords{dynamical systems, rational preperiodic points, uniform boundedness, Poonen's conjecture, algorithm}

\begin{abstract}
    For a morphism $f:\P^N \to \P^N$, the points whose forward orbit by $f$ is finite are called \emph{preperiodic points} for $f$. This article presents an algorithm to effectively determine all the rational preperiodic points for $f$ defined over a given number field $K$. This algorithm is implemented in the open-source software Sage for $\Q$. Additionally, the notion of a dynatomic zero-cycle is generalized to preperiodic points. Along with examining their basic properties, these generalized dynatomic cycles are shown to be effective.
\end{abstract}

\maketitle

    Let $f:\P^N \to \P^N$ be a morphism of (algebraic) degree at least $2$ defined over a number field $K$. Let $P \in \P^N(K)$ be a point, then we define the \emph{$n\tth$ iterate of $P$} as
	\begin{equation*}
		f^n(P) = f \circ f^{n-1}(P).
	\end{equation*}
	The collection of iterates
	\begin{equation*}
		\O_f(P) = \{P, f(P), f^2(P), \ldots\}
	\end{equation*}
    is called the \emph{(forward) orbit} of $P$ by $f$. If $\#\O_f(P)$ is finite, we say that $P$ is \emph{preperiodic}. A preperiodic point $P$ is \emph{periodic of period $n$} if $f^n(P)=P$. The smallest such $n$ is called the \emph{minimal period} for $P$. Northcott in the 1950s \cite{Northcott} used height functions to show that for any given $f$ and $K$, the set of preperiodic points of $f$ defined over $K$, $\Pre(f,K)$,  is finite. This is the dynamical analog of the finiteness of the rational torsion subgroup of an abelian variety $A/K$. Morton-Silverman in 1994 \cite{Silverman7} conjectured that $\Pre(f,K)$ is bounded independently of the map $f$, the dynamical analog of Merel's Theorem \cite{Merel} for elliptic curve torsion.
	\begin{conj*}[Morton-Silverman \cite{Silverman7}]
		For any integers $d \geq 2$, $N \geq 1$, and $D\geq 1$ there is a
        constant $C=C(d,N,D)$ with the following property: For any number field $K/\Q$ with $[K \col \Q]\leq D$
        and any morphism $f:\P^N\to\P^N$ of degree~$d$ defined over~$K$,
        \begin{equation*}
          \# \Pre(f,K) \le C.
        \end{equation*}
	\end{conj*}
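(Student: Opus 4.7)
\smallskip
\noindent\textbf{Proof proposal.} This is a long-standing open conjecture, so what follows is a strategic plan rather than a genuine proof. The natural strategy, modelled on Merel's theorem for elliptic curve torsion, is to construct a dynatomic moduli space for each period $n$ and control its $K$-rational points uniformly in $[K\col\Q]$.

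First I would reduce to the periodic case. Any $P\in\Pre(f,K)$ has a tail length $\ell(P)\geq 0$ characterized by $f^{\ell(P)}(P)\in\Per(f,K)$, and since each point of $\P^N$ has at most $d^N$ preimages under $f$, a uniform bound $\ell(P)\leq L(d,N,D)$ together with a bound on $\#\Per(f,K)$ yields a bound of the shape $\#\Pre(f,K)\leq \#\Per(f,K)\cdot d^{N(L+1)}$. The tail-length bound $L$ is accessible by reduction modulo primes of good reduction, following Morton--Silverman: the reduction of a rational preperiodic point modulo a prime $\mathfrak{p}$ is preperiodic for $\overline{f}$, and ramification of $f$ at the corresponding periodic cycle controls how long a chain of preimages can lift back from $\mathfrak{p}$-adic neighbourhoods to $K$.

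For $\#\Per(f,K)$, I would leverage the generalized dynatomic cycles developed in the present paper. For each $n$, let $\mathcal{X}_n$ denote the incidence scheme inside the parameter space of degree-$d$ endomorphisms of $\P^N$ times $\P^N$ whose fiber over $f$ is the support of the $n\tth$ dynatomic cycle, and set $\overline{\mathcal{X}}_n := \mathcal{X}_n/\PGL_{N+1}$. A $K$-rational point of $\overline{\mathcal{X}}_n$ corresponds to a conjugacy class of pairs $(f,P)$ with $P$ of minimal period $n$; the conjecture is therefore equivalent to showing that for fixed $d$, $N$, $D$ there exists $n_0$ with $\overline{\mathcal{X}}_n(L)=\emptyset$ for all $n\geq n_0$ and every number field $L$ with $[L\col\Q]\leq D$.

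The main obstacle is precisely this last step, and no current technique suffices. The elliptic analog rests on the Eisenstein/winding-quotient machinery of Mazur and Kamienny--Merel on the modular curves $X_1(N)$, a tool with no known counterpart in dynatomic moduli. Plausible inroads include (a) proving $\overline{\mathcal{X}}_n$ is of general type for $n\gg 0$ and invoking Lang-type conjectures on rational points, or (b) an arithmetic equidistribution argument producing a lower bound on canonical heights of $n$-periodic points that contradicts the Northcott property once $n$ is large. Even the one-dimensional case $d=2$, $N=1$ specializes to Poonen's conjecture and is currently open, so any full proof will require a substantial new idea rather than an assembly of existing tools.
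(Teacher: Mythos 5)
This statement is the Morton--Silverman uniform boundedness conjecture. It is an open problem: the paper does not prove it, but quotes it as motivation, and the paper's actual contribution is an algorithm for computing $\Pre(f,K)$ together with computational evidence for special families ($z^d+c$, conservative maps, etc.). There is therefore no proof in the paper to compare yours against, and your proposal --- by its own admission --- is not a proof either. You were right to say so explicitly rather than dress a heuristic up as an argument; nothing you wrote can be accepted as establishing the statement.

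As a strategic sketch, your outline is reasonable and consistent with the standard view of the problem. The reduction to periodic points is sound in spirit: each fiber of $f$ has at most $d^N$ geometric points, so a uniform tail-length bound $L(d,N,D)$ together with a uniform bound on $\#\Per(f,K)$ would give $\#\Pre(f,K)\leq \#\Per(f,K)\sum_{i=0}^{L}d^{Ni}$. One caveat: even the uniform tail-length bound is open, not merely ``accessible,'' since the good-reduction arguments you invoke depend on the primes of bad reduction, which vary with $f$; uniformity in $f$ is exactly what is missing there too. Your identification of the central obstruction --- the absence of any analogue of the Mazur/Kamienny--Merel machinery on dynatomic modular varieties, and the fact that already $d=2$, $N=1$ over $\Q$ contains the open Poonen conjecture --- is accurate and matches the paper's own framing (the partial results of Morton, Flynn--Poonen--Schaefer, and Stoll for periods $4$, $5$, $6$). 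In short: correctly diagnosed as open, correctly not claimed as proved.
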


    Poonen studied the explicit case where $f:\P^1 \to \P^1$ is a degree 2 polynomial defined over $\Q$. He makes the following conjecture.
	\begin{conj*}[Poonen \cite{Poonen}]
        For $n > 3$ there is no quadratic polynomial $f$ defined over $\Q$ with a $\Q$-rational periodic point of minimal period $n$.
	\end{conj*}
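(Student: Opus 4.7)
The plan is to translate the conjecture into a question about rational points on the \emph{dynatomic modular curves} and then to attack each such curve via Faltings' theorem together with explicit descent. First, every quadratic polynomial over $\Q$ is linearly conjugate over $\Q$ to a unique polynomial of the form $f_c(z) = z^2 + c$ with $c \in \Q$; since linear conjugation preserves minimal periods and the field of definition of orbits, it suffices to bound the minimal periods of $\Q$-rational periodic points of $f_c$ as $c$ varies over $\Q$.

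Second, I would introduce the $n\tth$ dynatomic polynomial
\begin{equation*}
    \Phi_n(z,c) = \prod_{d \mid n} \bigl(f_c^d(z) - z\bigr)^{\mu(n/d)} \in \Z[z,c],
\end{equation*}
whose affine zero locus $Y_1(n) \subset \A^2$ generically parametrizes pairs $(c,z)$ with $z$ of exact period $n$ under $f_c$. Let $X_1(n)$ denote a smooth projective model of $Y_1(n)$ over $\Q$. The conjecture then becomes the assertion that, for every $n \geq 4$, the only $\Q$-points of $X_1(n)$ are cusps, points in the degenerate locus, or points whose true minimal period strictly divides $n$ (the so-called parasitic solutions).

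Third, one invokes the genus formulas of Morton and Morton--Vivaldi for $X_1(n)$: the genus grows roughly like $2^{n-3}/n$, so in particular $g(X_1(n)) \geq 2$ for every $n \geq 4$, and Faltings' theorem immediately yields $\#X_1(n)(\Q) < \infty$ for each such $n$. To turn this finiteness into the explicit ``no new periodic point'' statement, one computes the Jacobian $J_1(n)$, determines its Mordell--Weil group, and applies Chabauty--Coleman or a Mordell--Weil sieve whenever $\operatorname{rank} J_1(n)(\Q) < g(X_1(n))$. This program has been carried out for $n = 4$ by Morton, for $n = 5$ by Flynn--Poonen--Schaefer, and for $n = 6$ by Stoll (conditional on BSD for a certain genus-$4$ quotient).

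The main obstacle, and the reason the conjecture remains open, is uniformity in $n$. Faltings is non-effective, so a case-by-case attack produces no bound on $n$ itself; moreover, $\operatorname{rank} J_1(n)(\Q)$ appears to grow with $n$, saturating the classical Chabauty inequality and forcing one into quadratic or non-abelian Chabauty even at moderate period. An unconditional proof therefore seems to demand either a uniform Mordell-type bound along the whole tower $\{X_1(n)\}_n$ (in the spirit of Dimitrov--Gao--Habegger) or a new structural input---an arithmetic analogue of Thurston rigidity, or a $p$-adic obstruction coming from the critical orbit of $f_c$---that rules out large-period rational orbits \emph{en masse} without resolving each curve in isolation. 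It is in this final step that any genuine progress on the conjecture must occur.
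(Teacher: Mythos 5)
The statement you were asked about is labeled a \emph{conjecture} in the paper, and the paper does not prove it: it only records the known partial results ($n=4$ by Morton, $n=5$ by Flynn--Poonen--Schaefer, $n=6$ conditionally by Stoll) and contributes numerical evidence via the author's algorithm (verification of Poonen's conjecture for $z^2+c$ with numerator and denominator of $c$ up to $\pm 10^8$, jointly with Ingram). So there is no proof in the paper to compare yours against.

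Your write-up is, by your own admission, not a proof either: the first three steps (reduction to $z^2+c$, passage to the dynatomic modular curves $X_1(n)$, genus growth plus Faltings and case-by-case Chabauty/Mordell--Weil sieve) accurately describe the strategy behind the cited results for $n=4,5,6$, but the fourth paragraph concedes that the uniformity in $n$ --- the entire content of the conjecture --- remains out of reach. That concession is correct, and it is precisely the gap: Faltings gives finiteness of $X_1(n)(\Q)$ for each fixed $n\ge 4$ but no statement for all $n$ simultaneously, and no known effective or uniform input closes this. As a survey of why the conjecture is open your account is sound and consistent with the literature the paper cites; as a proof of the statement it does not (and cannot currently) succeed. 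One small caution: be careful asserting a precise genus formula and $g(X_1(n))\ge 2$ ``for every $n\ge 4$'' without checking the small cases explicitly --- $X_1(4)$ has genus $2$ and $X_1(5)$ genus $14$, so the claim holds, but the asymptotic $2^{n-3}/n$ heuristic alone does not justify the low-$n$ cases you actually need.
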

    Assuming this conjecture, he shows that there can be at most $9$ $\Q$-rational preperiodic points and classifies all possible graph structures of $\Pre(f,\Q)$. For $n=4$ \cite{Morton4} and $n=5$ \cite{FPS} it is known that there are no $\Q$ rational periodic points of minimal period $n$. Stoll \cite{Stoll3} conditionally proves the case where $n=6$.  Jointly with Patrick Ingram \cite{Hutz2} the author has verified Poonen's conjecture for quadratic polynomials $x^2 + c$ for all $c$ values with numerator and denominator up to $\pm 10^8$.

    A few other families have been studied. Manes \cite{Manes2} studied a certain family of quadratic rational maps on $\P^1(\Q)$ conjecturing maximal period $4$ and at most $11$ rational preperiodic points. In higher dimensions, the author \cite{Hutz3} searched for periodic points defined over $\Q$ on Wehler's class of K3 surfaces defined on $\P^2 \times \P^2$. In \cite{Hutz4} the author constructs families of degree 2 polynomial maps on $\P^N$ which determines a lower bound on the growth factor for the largest minimal period of a $\Q$-rational periodic point as $N$ increases. In all these situations, the methods used were specific to the family studied and, except for \cite{Manes2, Poonen}, focused solely on the periodic points.\footnote{There is a recent preprint by Doyle-Faber-Krumm that presents an algorithm to compute $\Pre(f,K)$ for $f(z) = z^2+c$. Their method is to compute a height bound and then enumerate the points of small height on $K$\cite{DFK}.}
	
    The purpose of this article is to provide an algorithm for a number field $K$ and an implementation for $K=\Q$ in Sage \cite{sage} to compute $\Pre(f,K)$ and to gather numerical evidence related to the conjectures of Morton-Silverman and Poonen. Additionally, the notion of dynatomic cycles \cite{Hutz1} is extended to preperiodic points. Their basic properties are studied and they are shown to be effective.

    The article is organized as follows. Section \ref{sect_problem} discusses the problem and the difficulties the algorithm must solve. Section \ref{sect_algorithm} describes the algorithm in detail. Section \ref{sect_dyn_cycles} describes generalized dynatomic cycles. Section \ref{sect_experimental_results} discusses experimental results from applying the algorithm to various families of maps. Finally, Section \ref{sect_examples} provides a few interesting isolated examples and discusses running time of the implementation and next steps.

\section{Discussion of the problem}\label{sect_problem}
    Given a morphism $f:\P^N(K) \to \P^N(K)$ defined over a number field $K$, the goal is to find all the points in $\P^N(K)$ with finite forward orbit, i.e., the rational preperiodic points. The method is to use information about the cycle structure of $f$ modulo primes to determine information about the cycle structure of $f$ over $K$. This is similar to searching for rational solutions to Diophantine equations by reducing modulo primes.

    When reducing a rational map modulo primes, we must take some care in choosing the primes. Let $R$ be a discrete valuation ring, $K$ its field of fractions, $\pi$ a uniformizer, and $k$ the residue field with characteristic $p$.
	\begin{defn}
	    Denote $\overline{x}$ as the reduction of $x$ mod $\pi$.
	
	    We reduce a polynomial $f$ mod $\pi$, denoted $\overline{f}$, by reducing each of its coefficients.
	\end{defn}

    The notion of good reduction we want is that the local dynamics reflects the global dynamics
    \begin{equation} \label{eq1}
        \overline{f^n(x)} = \overline{f}^n(\overline{x}).
    \end{equation}
    \begin{prop}[{\cite{Silverman6}}] \label{thm_good_red_defn}
        Let $f=(f_0,\ldots,f_N):\P^N_R \to \P^N_R$. The following are equivalent:
        \begin{enumerate}
            \item $\deg(f) = \deg(\overline{f})$
            \item The equations $\overline{f_i}=0$ have no common solutions.
            \item The resultant $\Res(f_0,\ldots,f_n) \not\equiv 0 \pmod{\pi}$. \label{thm_GR_res}
        \end{enumerate}
    \end{prop}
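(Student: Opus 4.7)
The plan is to prove the three-way equivalence by establishing (b) $\Leftrightarrow$ (c) from the defining property of the Macaulay resultant, and then (a) $\Leftrightarrow$ (b) geometrically. First I would normalize $(f_0,\ldots,f_N)$ so that the ideal generated by their coefficients in $R$ is the unit ideal, i.e., clear out any common power of $\pi$. This ensures the reductions $\overline{f_i}$ are not all identically zero in $k[x_0,\ldots,x_N]$, so $\overline{f}$ is a nontrivial object of study.

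For (b) $\Leftrightarrow$ (c), I would invoke the fundamental property of the Macaulay multipolynomial resultant: for $N+1$ homogeneous forms $g_0,\ldots,g_N$ of degree $d$ in $N+1$ variables, $\Res(g_0,\ldots,g_N)$ is a polynomial with integer coefficients in the coefficients of the $g_i$, and it vanishes over a field $L$ if and only if the $g_i$ have a common nontrivial zero in $\overline{L}^{\,N+1}$. Since the resultant is polynomial in the coefficients of the $g_i$, reduction modulo $\pi$ commutes with taking the resultant:
\[
    \Res(\overline{f_0},\ldots,\overline{f_N}) = \overline{\Res(f_0,\ldots,f_N)} \quad \text{in } k.
\]
Applying the fundamental property over the algebraic closure $\overline{k}$ then yields (b) $\Leftrightarrow$ (c) directly.

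For (a) $\Leftrightarrow$ (b), the content is that reduction preserves both the morphism property and the algebraic degree. If (b) holds, the $\overline{f_i}$ have no common zero in $\P^N(\overline{k})$ and hence define a morphism $\overline{f}\colon \P^N_k \to \P^N_k$; moreover, they share no positive-degree common factor (any such factor would contribute a common zero over $\overline{k}$), so the common degree $d$ persists and $\deg\overline{f}=d=\deg f$, giving (a). Conversely, if (b) fails there is a common zero $P\in\P^N(\overline{k})$: either $P$ lies on a positive-degree common factor, whose removal strictly lowers the degree of $\overline{f}$; or $P$ is an isolated base point, so $\overline{f}$ fails to extend to a morphism of $\P^N_k$. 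In either case (a) fails. The principal obstacle is the resultant theorem used for (b) $\Leftrightarrow$ (c), a nontrivial result from classical elimination theory; I would invoke it as a black box, noting that its compatibility with base change is built into the fact that the resultant is a universal polynomial in the coefficients.
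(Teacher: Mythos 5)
The paper does not actually prove this proposition: it is quoted from Morton--Silverman with a citation and no argument, so the comparison is against the standard proof rather than anything internal to the paper. Your treatment of (b) $\Leftrightarrow$ (c) is that standard argument and is correct: the Macaulay resultant is a universal integer polynomial in the coefficients, hence commutes with reduction modulo $\pi$, and its vanishing over a field detects a common nontrivial zero over the algebraic closure. The preliminary normalization so that the coefficients of the $f_i$ generate the unit ideal of $R$ is also genuinely needed (otherwise rescaling $f$ by $\pi$ changes condition (c) without changing the map), and you are right to make it explicit.

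The gap is in the direction ``(b) fails $\Rightarrow$ (a) fails'' when $N>1$. You split into two cases: the common zero of the $\overline{f_i}$ lies on a positive-degree common factor, or it is an ``isolated base point.'' First, this dichotomy is not exhaustive: the common zero locus can be positive-dimensional without containing a divisor (e.g.\ a curve in $\P^3$). More seriously, in the second case you conclude only that $\overline{f}$ fails to be a morphism and then assert that (a) fails with no justification. Under the algebraic-degree convention the paper uses (the common degree of the defining forms, after cancelling common factors), a codimension-$\geq 2$ base locus does \emph{not} lower $\deg\overline{f}$: the reduced forms still have degree $d$. To close this case you must either interpret $\deg\overline{f}$ as the mapping degree of the rational map (the number of preimages of a generic point, which genuinely drops below $d^N$ in the presence of indeterminacy, by an excess-intersection computation), or build into condition (a) the assertion that $\overline{f}$ is a morphism of the same algebraic degree. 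On $\P^1$ the issue disappears, since a common zero of two binary forms forces a common linear factor; that is why the classical statement is usually proved only there, and your argument as written does not fully bridge (a) and (b) for $N>1$.
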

    \begin{defn}
        If $f \col \P^N \to \P^N$ satisfies any condition of Proposition \ref{thm_good_red_defn}, then we say that $f$ has \emph{good reduction} modulo $\pi$. Otherwise, we say $f$ has \emph{bad reduction} modulo $\pi$.

        A map with good reduction satisfies equation (\ref{eq1}).
    \end{defn}

    It is clear that the minimal period of a periodic point in the residue field $k$ (local information) must divide the minimal period over $K$ (global information). Furthermore, there is a precise description of the relationship between the local and global minimal periods. For $f:\P^1 \to \P^1$ this is a collection of results from several authors \cite[Theorem 2.21]{Silverman10}. We state the special case of $f:\P^N \to \P^N$ from the general theorem \cite{Hutz2}.
	\begin{prop}[\cite{Hutz2}] \label{thm_goodreduction}
        Let $f:\P^N \to \P^N$ be a morphism defined over $K$ with good reduction at $\pi$. Let $P \in \P^N(K)$ be a periodic point with minimal period $n$ such that $\overline{P}$ has minimal period $m$ for $\overline{f}$. Then, there is some $f$ stable subspace $V$ of the cotangent space of $\P^N$ such that
        \begin{equation*}
            n=m \quad \text{or} \quad n=mr_Vp^{e}
        \end{equation*}
        for some explicitly bounded integer $e\geq 0$ and where $r_V$ is the order of $d\overline{f^m}_P$ on $V$. The bound for $e$ is given by
         \begin{equation*}
            e \leq
            \begin{cases}
              1 + \log_2(v(p)) & p \neq 2\\
              1 + \log_{\alpha}\left(\frac{\sqrt{5}v(2) + \sqrt{5(v(2))^2 + 4}}{2}\right) & p=2,
            \end{cases}
        \end{equation*}
        where $\alpha = \frac{1+\sqrt{5}}{2}$ and $v(\cdot)$ is the valuation. Moreover, $V$ is the scheme theoretic closure of the finite orbit $\O_{\overline{f}}(\overline{P})$.
    \end{prop}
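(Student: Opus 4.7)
The plan is to reduce to a fixed-point situation and then carry out a formal-group analysis around $\overline{P}$, parallel to the one-dimensional argument surveyed in \cite{Silverman10}. Good reduction (equation (\ref{eq1})) gives $\overline{f}^n(\overline{P}) = \overline{f^n(P)} = \overline{P}$, so the minimal period $m$ of $\overline{P}$ divides $n$. Writing $n = mk$ and setting $g = f^m$, it suffices to prove the following: if $\overline{g}$ fixes $\overline{P}$ and $P$ has minimal period $k$ under $g$, then either $k = 1$ or $k = r_V p^e$ with $e$ bounded as stated.

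Next I would localize at $\overline{P}$. Choose affine coordinates sending $\overline{P}$ to the origin and expand $g(z) = Az + h(z)$, where $A = d\overline{g}_{\overline{P}}$ and $h$ has order at least two. Because the residue field is finite, every $A$-stable subspace on which $A$ acts invertibly carries an action of finite order. The subspace $V$ is taken to be the cotangent space at $\overline{P}$ of the scheme-theoretic closure of $\O_{\overline{f}}(\overline{P})$, viewed inside $T^{*}_{\overline{P}} \P^N$; it is $\overline{f}$-stable by construction, so $A|_V$ has some finite order $r_V$. Expanding $g^k(z) = A^k z + O(z^2)$ and reading off the linear part of $g^k(P) = P$ on $V$ forces $A^k|_V = I$, hence $r_V \mid k$. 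Write $k = r_V s$.

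The remaining step is to show $s$ is a power of $p$ and to bound the exponent. After replacing $g$ by $g^{r_V}$ we may assume $A|_V = I$, so along $V$ the local expansion of $g$ has the form $z + q(z)$ with $q$ of order $\geq 2$. A standard $p$-adic Taylor estimate then shows that each application of $g^p$ increases the valuation of $g^{p^j}(P) - P$ along $V$ by at least $v(p)$ when $p$ is odd, and by a slower, Fibonacci-governed amount when $p = 2$ (since squaring a $2$-adic integer close to $1$ gains only one power of $2$). Iterating and comparing the result with the valuation of $P - \overline{P}$ yields the stated bounds on $e$.

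The main obstacle is the correct choice of $V$. In dimension one any nonzero derivative already gives a finite-order multiplier, and the $\P^1$ result needs no such auxiliary subspace, but in higher dimensions $A$ acting on all of $T^{*}_{\overline{P}}\P^N$ can have zero or wild eigenvalues unrelated to the dynamics of $P$ itself. The scheme-theoretic closure of the orbit singles out exactly the directions in which $P$ deforms from $\overline{P}$, and on those directions the linearization is necessarily invertible; making this rigorous is what distinguishes the $\P^N$ argument from the classical $\P^1$ case.
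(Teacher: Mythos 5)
A preliminary remark: the paper does not prove this proposition; it is imported verbatim from \cite{Hutz2}, so there is no in-paper proof to compare against. Your overall architecture --- pass to $g=f^m$ fixing $\overline{P}$, linearize at $\overline{P}$, split the residual period into the order $r_V$ of the linearization on the orbit directions times a $p$-power, and bound the $p$-power by a valuation estimate --- is the standard route and is the one taken in the cited source and in the $\P^1$ literature. Two steps, however, do not work as written.

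First, ``reading off the linear part of $g^k(P)=P$ on $V$ forces $A^k|_V=I$'' is not a deduction: $g^k(P)=P$ is a statement about one point, and expanding it only gives that $\overline{A}^k-I$ annihilates the single reduced direction vector $\overline{P/\pi^c}$, not that it vanishes on a subspace. To get $r_V\mid k$ you must use the entire orbit: show the increments $v(g^{i+1}(P)-g^i(P))$ are constant around the cycle (they are nonnegative and sum to zero), conclude that $\overline{A}$ permutes the finitely many reduced orbit directions and is therefore invertible of finite order on their span, and identify that span with $V$ (which, incidentally, must be the cotangent space of the closure of the orbit of the global point $P$ in $\P^N_R$, not of $\O_{\overline{f}}(\overline{P})$, which is already a reduced finite set). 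You correctly flag this as the main obstacle, but it is precisely the content that distinguishes the $\P^N$ statement from $\P^1$, and asserting it is not proving it. Second, your mechanism for bounding $e$ --- each application of $g^p$ gains at least $v(p)$ in valuation, terminate by comparison with $v(P-\overline{P})$ --- cannot yield the stated bound: $v(P-\overline{P})$ is not bounded in terms of the data, and a gain of $v(p)$ per step would give a bound linear in that quantity, not $1+\log_2(v(p))$ and certainly not the golden-ratio bound at $p=2$. The correct estimate is a recursion on the displacement valuations $c_j=v\left(g^{p^j}(P)-P\right)$ of the shape $c_{j+1}\geq\min(2c_j,\,c_j+v(p),\dots)$, degenerating for $p=2$ to a two-term Fibonacci-type inequality $c_{j+1}\geq c_j+c_{j-1}$, played against a separate \emph{upper} bound on the $c_j$ in terms of $v(p)$ forced by the requirement that the period genuinely drop by a factor of $p$ at each stage; the logarithmic bound comes from geometric growth hitting that fixed ceiling.
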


\section{Algorithm} \label{sect_algorithm}
    A broad outline of the algorithm is as follows.
    \begin{enumerate}
        \item For several primes $p$ with good reduction, find the list of possible global periods:
            \begin{enumerate}
                \item Find all the periodic cycles modulo $p$.
                \item Compute $m, mr_Vp^e$ for each cycle. (Proposition \ref{thm_goodreduction}).
            \end{enumerate}
        \item Intersect the lists of possible periods for the chosen primes.
        \item For each $n$ in the intersection, find all rational solutions to $f^n(P) = P$.

            We have now determined all the rational periodic points.
        \item For each known rational preperiodic point $P$ find all its rational preimages, i.e., rational solutions to $f(Q)=P$.
        \begin{enumerate}
            \item Repeat until there are no new rational preperiodic points.
        \end{enumerate}
    \end{enumerate}
    We now discuss details and give examples.
    \subsection{Rational periodic points}
        As a direct consequence of Proposition \ref{thm_good_red_defn}\ref{thm_GR_res}, each map has a finite number of primes of bad reduction. In practice, the data from a small number of good primes ($3$-$5$) is typically sufficient to have the correct (or nearly correct) set of possible periods.
        Consequently, we assume that the primes used are small. With this assumption, the algorithm is implemented in the memory-intensive approach of building the complete table of forward images. The points of $\P^N(k)$ are hashed so that the table of entries $(P,f(P))$ is a table of integers.
        The iteration data is obtained by evaluating the function at each point only once. This method is used because evaluation of the function is a very expensive operation compared to a table look-up.

        Once all the local cycles have been found, we must compute the complete list of possible periods from Proposition \ref{thm_goodreduction}: $\{m,mr_V,mr_Vp,mr_Vp^2,\ldots, mr_Vp^e\}$.  An easy application of the chain rule allows us to compute the derivative of iterates with only evaluation of the function itself.
        \begin{equation} \label{eq_multiplier}
            (f^m(P))' = \prod_{i=0}^{m-1} f'(f^i(P)).
        \end{equation}
        This is important since for large periods the function $f^m$ will become unmanageable.

        We must also take into account the $V$ in Proposition \ref{thm_goodreduction}. In dimension $1$, there is no issue as $\P^1$ and $V$ are both dimension $1$, and we simply take the multiplicative order of the value of the derivative (i.e., the multiplier).
        However, in higher dimensions the derivative becomes the Jacobian matrix; and we need the multiplicative order of the, potentially proper, subset $V$ of that matrix. While $V$ may be explicitly described as the scheme theoretic closure of a finite list of points, determining the explicit subset of the derivative matrix can be time consuming.
        Since, in practice, the data from a small number of primes greatly reduces the list of possible periods, this issue was by-passed. We compute the multiplicative order of the eigenvalues of the Jacobian matrix and then expand the list $\{m,mr,mrp, \ldots, mrp^e\}$ by allowing $r$ to be the least common multiple of \emph{any} combination of those values.
        While this expands the list of possible periods for each prime, it is still a valid list of possible periods since the correct $r_V$ must be contained in the list of least common multiples.

        \begin{exmp}
            For the map $f(z) = z^2-7/4$ over $\Q$, the only prime of bad reduction is $2$. Reducing modulo $3$, $(1:0)$ is fixed and $(0:1)$ is periodic of minimal period 2. They both have multiplier 0, so the possible global periods are
            \begin{equation*}
            		\Per_3=\{1,2\}.
            \end{equation*}
            Reducing modulo $5$, $(1:0)$ is fixed and $(1:1)$ is periodic of minimal period 2. The point $(1 : 1)$ has $r=4$, so the possible global periods are
            \begin{equation*}
            		\Per_5=\{1,2,8\}.
            \end{equation*}
            Reducing modulo $7$, the possible global periods are
            \begin{equation*}
            		\Per_7=\{1,2,3,6\}.
            \end{equation*}
            The intersection of these sets of possible periods is
            \begin{equation*}
            		\Per_3 \cap \Per_5 \cap \Per_7 = \{1,2\}
            \end{equation*}
            Thus, over $\Q$, there may be fixed points and points of minimal period $2$. This does not guarantee that there are points with these periods, but it does prove that there are not any (rational) points of any other minimal period.

            Solving the two equations
            \begin{align*}
                f(z) = z \qquad f^2(z) = z
            \end{align*}
            we find the fixed point at infinity $(1:0)$ and the two-cycle $[(1:2) \to (-3:2)]$.
        \end{exmp}

        To determine the rational periodic points, we find all rational solutions to the equations $f^n(P) = P$ for all $n$ in the set of possible periods. The implementation computes a $p$-adic approximation of local periodic points to an accuracy predetermined by a height calculation. The smallest rational point approximated by this $p$-adic approximation is determined by the $LLL$ basis reduction algorithm \cite{LLL}. If this point has height larger than the precomputed bound, then it is not a periodic point. Otherwise, we verify that it is a periodic point.  Over number fields, there are other algorithms to compute ``short'' bases \cite{Fieker}.

        To produce a height bound, we use a Nullstellensatz argument. In particular, if $f$ is a morphism then there is some integer $D$ such that
        \begin{equation*}
            x_i^D \in (f_0,\ldots,f_N) \qquad 0 \leq i \leq N.
        \end{equation*}
        A value of $D$ valid for all $f:\P^N \to \P^N$ is explicitly known.
		\begin{lem}{\cite[Corollary p.169]{Lazard}}
			If $f=[f_0,\ldots,f_N]$ is a morphism with $f_i \in K[x_0,\ldots,x_N]$, then
			\begin{equation*}
				x_i^{(N+1)(d-1)+1} \in (f_0,\ldots,f_N).
			\end{equation*}
		\end{lem}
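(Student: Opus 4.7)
The plan is to exploit the fact that $f$ being a morphism of $\P^N$ forces the homogeneous polynomials $f_0,\ldots,f_N$ to have no common zero in $\P^N(\bar K)$, equivalently no common zero in $\A^{N+1}(\bar K)$ other than the origin. This is exactly the statement that $f_0,\ldots,f_N$ cut out the irrelevant locus in $S=K[x_0,\ldots,x_N]$, so they form a system of parameters for the graded ring $S$. Since $S$ is Cohen--Macaulay (indeed a polynomial ring), any system of parameters is automatically a regular sequence.

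Once this is in place, the quotient $A=S/(f_0,\ldots,f_N)$ is a graded complete intersection, hence an Artinian Gorenstein $K$-algebra. Because each $f_i$ is homogeneous of the same degree $d$, the Koszul resolution is a resolution by free graded $S$-modules, and the Hilbert series of $A$ factors as
\begin{equation*}
    H_A(t) = \prod_{i=0}^{N} \frac{1-t^d}{1-t} = (1 + t + \cdots + t^{d-1})^{N+1}.
\end{equation*}
This is a polynomial in $t$ of degree $(N+1)(d-1)$, so $A_k = 0$ for every $k > (N+1)(d-1)$. Translating back to $S$, every homogeneous polynomial of degree $>(N+1)(d-1)$ lies in $(f_0,\ldots,f_N)$, and in particular
\begin{equation*}
    x_i^{(N+1)(d-1)+1} \in (f_0,\ldots,f_N)
\end{equation*}
for each $i$, which is the desired conclusion.

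The substantive step is the first one, namely identifying the morphism hypothesis with the regular sequence property; after that, the bound is essentially just reading off the top degree of the Hilbert polynomial of a complete intersection. If one wished to avoid invoking Cohen--Macaulayness, the same conclusion can be reached by noting that a sequence of homogeneous elements in $S_{+}$ cutting out the origin has length at most $\dim S = N+1$, and any such sequence of the maximal length $N+1$ is necessarily regular (this is Macaulay's unmixedness theorem for polynomial rings). The Hilbert series calculation is routine and the only genuine use of the hypothesis that $f$ is a morphism rather than merely a rational map enters through verifying that $f_0,\ldots,f_N$ have no nontrivial common zero.
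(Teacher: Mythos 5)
Your proof is correct. Note that the paper does not actually prove this lemma; it simply cites it as a corollary from Lazard's 1977 paper, so there is no internal argument to compare against. What you have written is a clean, self-contained derivation of the special case needed here ($N+1$ forms of equal degree $d$ in $N+1$ variables): morphism $\Rightarrow$ no common zero of $f_0,\ldots,f_N$ outside the origin $\Rightarrow$ the $f_i$ are a homogeneous system of parameters for the Cohen--Macaulay ring $S=K[x_0,\ldots,x_N]$, hence a regular sequence, hence the Koszul complex resolves $A=S/(f_0,\ldots,f_N)$ and gives $H_A(t)=(1+t+\cdots+t^{d-1})^{N+1}$, whose top degree $(N+1)(d-1)$ yields the bound. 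This is essentially the standard proof of the Macaulay--Lazard bound in the equal-degree case, and it is more transparent than chasing the general statement (arbitrary degrees $d_0\ge\cdots\ge d_N$, bound $\sum d_i - N$) through Lazard. Two small points: the dimension count and the resulting Hilbert function are insensitive to passing to $\bar{K}$, so membership of $x_i^{(N+1)(d-1)+1}$ in the ideal holds already over $K$ --- worth one sentence if you want to be careful. And in your closing aside, the claim that a homogeneous sequence cutting out the origin ``has length at most $N+1$'' is backwards (such a sequence has length at least $N+1$; it is a sequence of exactly $N+1$ such elements that is forced to be regular); this does not affect your main argument, which does not rely on that remark.
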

        We then explicitly find the combinations
        \begin{equation*}
            x_j^D = \sum_{i=0}^N f_{i}g_{i,j} \qquad 0 \leq j \leq N
        \end{equation*}
        and use them to compute an explicit bound on the difference between the height of a point and the canonical height of a point. Since the canonical height of a preperiodic point must be $0$, this gives an upper bound on the height of a preperiodic point.
        \begin{defn}
            For a polynomial $f(x_0,\ldots,x_N) = \sum_{\alpha} c_{\alpha}x^{\alpha}$, we define its height as the maximum height of its coefficients
            \begin{equation*}
                h(f) = \max_{\alpha}(h(c_{\alpha})).
            \end{equation*}
        \end{defn}

        \begin{prop} \label{prop_height_bound}
            Let $f:\P^N \to \P^N$ be a degree $d$ morphism and $P \in \Pre(F,K)$. Let
            \begin{equation*}
                D = (N+1)(d-1)+1.
            \end{equation*}
            Then,
            \begin{equation*}
                h(P) \leq \frac{1}{\deg(f) - 1}\max\left(h(f)+\log{\binom{N+d}{d}}, \log\left((N+1)\binom{N+D-d}{D-d}\right) + \max_i h(g_i)\right).
            \end{equation*}
        \end{prop}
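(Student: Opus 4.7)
The strategy is the standard Call--Silverman framework: sandwich $h(f(Q))$ between $d\cdot h(Q)\pm C$ for all $Q\in\P^N(\bar K)$, use the telescoping identity to compare $h$ with the canonical height $\hhat_f$, and then invoke $\hhat_f(P)=0$ for $P\in\Pre(f,K)$. Concretely, if one can show
\begin{equation*}
  -c_1 \leq h(f(Q)) - d\cdot h(Q) \leq c_2
\end{equation*}
for every $Q$, then the usual telescoping of $d^{-n}h(f^n(Q))-h(Q)$ as a geometric sum gives $|\hhat_f(Q)-h(Q)|\leq \max(c_1,c_2)/(d-1)$, and specializing to a preperiodic $P$ yields exactly the claimed inequality with
\begin{equation*}
  c_2=h(f)+\log\binom{N+d}{d},\qquad c_1=\log\!\left((N+1)\binom{N+D-d}{D-d}\right)+\max_i h(g_i).
\end{equation*}

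The upper bound is routine: at each place $v$, the triangle inequality applied to $f_i(P)=\sum_\alpha c_\alpha x^\alpha$ gives $|f_i(P)|_v\leq \epsilon_v\binom{N+d}{d}\cdot\max_\alpha|c_\alpha|_v\cdot\max_k|x_k(P)|_v^d$, with $\epsilon_v=1$ at non-archimedean places and the combinatorial factor only contributing at archimedean places. Summing $\log\max_i|f_i(P)|_v$ over $v$ produces $c_2$.

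The main work is the lower bound, where the Nullstellensatz identities $x_j^D=\sum_i f_i g_{i,j}$ enter. Evaluating at $P$ and estimating at a place $v$,
\begin{equation*}
  |x_j(P)|_v^D \leq \epsilon_v\,(N+1)\cdot\max_i|f_i(P)|_v\cdot\max_i|g_{i,j}(P)|_v,
\end{equation*}
and taking the maximum over $j$ before summing over $v$ gives
\begin{equation*}
  D\cdot h(P)\leq \log(N+1)+h(f(P))+h\bigl(\max_{i,j}g_{i,j}(P)\bigr).
\end{equation*}
Applying the same monomial-counting bound as in the upper-bound step, but now to each $g_{i,j}$ (which has degree $D-d$), one gets $h(\max_{i,j}g_{i,j}(P))\leq \log\binom{N+D-d}{D-d}+\max_{i,j}h(g_{i,j})+(D-d)h(P)$. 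Rearranging yields $d\cdot h(P)-h(f(P))\leq c_1$, as needed.

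The principal obstacle is bookkeeping: handling the archimedean vs.\ non-archimedean contributions of the multinomial factors uniformly, and verifying that the max-over-coordinates in $D\cdot h(P)=\sum_v\log\max_j|x_j(P)|_v^D$ is compatible with taking maxima on the right-hand side \emph{inside} each local factor before summing. Once one commits to estimating $|x_j(P)|_v^D$ at a \emph{single} place $v$ (with the max over $j$ pulled inside), all the claimed constants drop out mechanically, and the canonical height argument finishes the proof by dividing the combined bound by $d-1$.
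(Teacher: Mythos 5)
Your proposal is correct and follows essentially the same route as the paper: the same upper bound by monomial counting, the same lower bound via the Nullstellensatz identities $\sum_i f_i g_{i,j} = \Res\cdot x_j^D$ applied to $H(P)^D$, and the same telescoping geometric-series argument giving $\abs{\hhat_f(P)-h(P)}\leq C_1/(d-1)$ before invoking $\hhat_f(P)=0$. The only difference is presentational — you carry out the estimates place by place, while the paper works directly with global heights and absolute values.
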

        \begin{proof}
            We need the constant $C$ such that
            \begin{equation*}
                \abs{\hhat_f(P) - h(P)} < C,
            \end{equation*}
            where $\hhat_f$ is the canonical height of $P$ with respect to $f$.

            We first produce a bound
            \begin{equation*}
                \abs{h(f(P)) - dh(P)} \leq C_1.
            \end{equation*}
            An upper bound is obtained by simply taking the largest coefficient of $f$ times the number of monomials of degree $d$,
            \begin{equation*}
                h(f(P)) \leq \log\left(H(P)^dH(f)\binom{N+d}{d}\right) = dh(P)+h(f)+\log{\binom{N+d}{d}}.
            \end{equation*}
            Since $f=[f_0,\ldots,f_N]$ is a morphism, the Nullstellensatz implies there exist $N+1$ sets of polynomials $\{g_{0,j},\ldots, g_{N,j}\} \in R[x_0,\ldots, x_N]$ homogeneous of degree $D - d$ such that
            \begin{equation*}
                \sum_{i=0}^N f_ig_{i,j} = \Res \cdot  x_j^D,
            \end{equation*}
            where $\Res = \text{resultant}(f_0,\ldots,f_N)$ (\cite[\S 1 (p.8)]{Macaulay}).
            Now we compute
            \begin{align*}
            		H(P)^D &= \max(\abs{x_j})^D\\
            			&= \max \abs{\sum_{i=0}^N f_ig_{i,j}}\\
            			&\leq (N+1) \max{\abs{f_ig_{i,j}}}\\
            			&\leq (N+1) \binom{N+D-d}{D-d}(\max H(g_i)) H(P)^{D-d}H(f(P)).
            \end{align*}
            Dividing both sides by $H(P)^{D-d}$ and taking logarithms yields
            \begin{equation*}
            dh(P) \leq h(f(P)) + \log(C_3)
            \end{equation*}
            where
            \begin{equation*}
            		C_3 = \binom{N+D-d}{D-d}(\max_i h(g_i))
            \end{equation*}
            does not depend on $P$. Taking the larger of the upper and lower bounds gives us the desired $C_1$ such that
            \begin{equation}\label{eq_height_difference}
            		\abs{\hhat_f(P) - dh(P)} \leq C_1.
            \end{equation}

            With the $C_1$ in hand, we apply the limit definition of the canonical height
            \begin{equation*}
            		\hhat_f(P) = \lim_{n \to \infty} \frac{h(f^n(P))}{(\deg{f})^n}.
            \end{equation*}
            Let $a \geq 0$ be an integer. Then from (\ref{eq_height_difference}),
            \begin{align*}
            		\abs{\frac{h(f^a(P))}{(\deg{f})^a} - h(P)}
            		&= \abs{ \sum_{k=0}^{a-1} \frac{h(f^{k+1}(P))}{(\deg{f})^{k+1}} - \frac{h(f^k(P))}{(\deg{f})^k} }\\
            		&\leq   \sum_{k=0}^{a-1} \frac{1}{(\deg{f})^{k+1}}\abs{h(f^{k+1}(P)) - (\deg{f})h(f^k(P)) }\\
            		&\leq \sum_{k=0}^{a-1} \frac{C_1}{(\deg{f})^{k+1}}= \frac{1}{\deg{f}}\sum_{k=0}^{a-1} \frac{C_1}{(\deg{f})^k}\\
            \end{align*}
			Taking the limit as $a \to \infty$, we have
            \begin{equation*}
				\abs{\hhat_f(P) - h(P)} \leq
                    \frac{1}{\deg{f}}\sum_{k=0}^{\infty} \frac{C_1}{(\deg{f})^k}
                    =\frac{C_1}{\deg(f)-1}.
            \end{equation*}
        \end{proof}

        Now that we have an upper bound on the height of a rational preperiodic point, we can determine how far we must carry the $p$-adic approximation.  Since the implementation is for $\Q$, we compute the constant needed for the LLL application. We are going to apply LLL to the lattice in $\Z^{N+1}$ generated by the $p$-adic approximation $\overline{P}$ (for $f(P)=P$) and $p^{\ell}$ times the standard basis of $\Z^{N+1}$. Let $b'$ be the smallest vector in the lattice obtained from applying LLL. Let $P$ be the projective point with coordinates $b'$. By our choice of $\ell$, $P$ is unique point of height $\leq B$ corresponding to $\overline{P}$, if such a point exists.

        The constant $B$ is determined in Proposition \ref{prop_height_bound}, so we need to determine the required $\ell$ given $B$.
        \begin{prop}
            Let $\overline{P} \in \Z/p^{\ell}\Z$ and $P$ the point corresponding to the smallest vector from applying the LLL algorithm to the coordinates of $\overline{P}$ and $p^{\ell}$ times the standard basis of $\Z^{N+1}$. If
            \begin{equation*}
                p^{\ell} \geq 2^{N/2+1}B^2\sqrt{N+1},
            \end{equation*}
            then $P$ is the unique point corresponding to $\overline{P}$ of height $< B$ if such a point exists.
        \end{prop}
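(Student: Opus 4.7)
The plan is to run a $2 \times 2$ minor (cross-product) argument in the lattice $L \subset \Z^{N+1}$ generated by a lift $x = (x_0,\ldots,x_N)$ of $\overline{P}$ together with $p^{\ell} e_0, \ldots, p^{\ell} e_N$. First I would observe that every vector $v \in L$ has the form $v = c x + p^{\ell} w$ for some $c \in \Z$ and $w \in \Z^{N+1}$, so $v_i \equiv c\, x_i \pmod{p^{\ell}}$ for every $i$. In other words, $L$ is exactly the set of integer $(N+1)$-tuples whose reduction mod $p^{\ell}$ is a scalar multiple of $\overline{P}$, so every integer projective point corresponding to $\overline{P}$ lies in $L$.

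Next, suppose some $P=(y_0 \col \cdots \col y_N)$ with coprime integer coordinates and height $H(P) < B$ corresponds to $\overline{P}$. Then $P \in L$ and $\|P\|_2 < B\sqrt{N+1}$, so the first successive minimum satisfies $\lambda_1(L) < B\sqrt{N+1}$. The standard LLL guarantee in a rank-$(N+1)$ lattice returns a reduced vector $b' = (b'_0,\ldots,b'_N) \in L$ with
\begin{equation*}
  \|b'\|_{\infty} \le \|b'\|_2 \le 2^{N/2}\,\lambda_1(L) < 2^{N/2} B\sqrt{N+1}.
\end{equation*}

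The key step is a modular identity on the $2 \times 2$ minors. Since both $b'$ and $P$ lie in $L$, there exist integers $a,c$ with $b'_i \equiv c\, x_i$ and $y_i \equiv a\, x_i \pmod{p^{\ell}}$ for all $i$, hence
\begin{equation*}
  b'_i y_j - b'_j y_i \equiv ca(x_i x_j - x_j x_i) \equiv 0 \pmod{p^{\ell}}.
\end{equation*}
On the other hand, using $|b'_i| < 2^{N/2} B\sqrt{N+1}$ and $|y_i| < B$, the triangle inequality gives
\begin{equation*}
  \abs{b'_i y_j - b'_j y_i} < 2 \cdot 2^{N/2} B\sqrt{N+1} \cdot B = 2^{N/2+1} B^2 \sqrt{N+1} \le p^{\ell}.
\end{equation*}
Combining the two forces every minor $b'_i y_j - b'_j y_i$ to be exactly zero, so $b'$ and $P$ are $\Q$-proportional, i.e., define the same projective point. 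Applying the identical minor calculation to two hypothetical candidates $P,P'$ of height $<B$ (the constant $2B^2$ is absorbed by $2^{N/2+1}B^2\sqrt{N+1}$) forces $y_i y'_j = y_j y'_i$ and yields projective uniqueness.

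The main obstacle is bookkeeping the constants so that the archimedean bound $2^{N/2+1}B^2\sqrt{N+1}$ is dominated by $p^{\ell}$: the factor $2^{N/2}$ comes from LLL's worst-case norm guarantee on a rank-$(N+1)$ lattice, the $\sqrt{N+1}$ converts between $\|\cdot\|_{\infty}$ and $\|\cdot\|_2$, and the extra factor of $2$ accounts for the triangle inequality on the minor. Once these fit under $p^{\ell}$, the modular-zero combined with archimedean-small conclusion collapses the minor to $0$ and gives both correctness of the LLL output and projective uniqueness in one step.
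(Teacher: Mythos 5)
Your proof is correct, and it reaches the stated constant by a genuinely different (and cleaner) route than the paper. The paper splits the claim into two pieces: (i) uniqueness, via the assertion that points of height $<B$ inject into $\P^N(\Z/p^{\ell}\Z)$ once $p^{\ell}>2B^2$ (stated as ``not hard to see''), and (ii) a size bound on the LLL output obtained by combining the determinant bound $\abs{b_0}\le 2^{N/4}d(L)^{1/(N+1)}$ with the approximation factor $2^{N}$ from \cite[Proposition 1.11]{LLL}. You instead run a single $2\times 2$-minor computation twice: any two vectors of the lattice $L$ reduce mod $p^{\ell}$ to scalar multiples of $\overline{P}$, so all their cross minors are divisible by $p^{\ell}$, and the hypothesis $p^{\ell}\ge 2^{N/2+1}B^2\sqrt{N+1}$ makes those minors archimedean-small, forcing them to vanish and hence forcing projective proportionality. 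This simultaneously proves the injectivity claim the paper leaves implicit and identifies the LLL output with the candidate point. It also sidesteps the paper's determinant step, which is the shakiest part of the printed argument: the lattice satisfies $p^{\ell}\Z^{N+1}\subseteq L\subseteq\Z^{N+1}$, so the stated bound $d(L)\ge(p^{\ell})^{N+1}$ is in the unhelpful direction, whereas your observation that $\lambda_1(L)<B\sqrt{N+1}$ (from the mere existence of a height-$<B$ candidate in $L$) is exactly the quantity the LLL worst-case guarantee $\lVert b'\rVert_2\le 2^{N/2}\lambda_1(L)$ needs. Both arguments use the same two LLL ingredients, the factor $2^{N/2}$ and the $\sqrt{N+1}$ conversion between $\lVert\cdot\rVert_2$ and the height, so the constants agree.
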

        \begin{proof}
            It is not hard to see that points in $\P^N(\Q)$ of height $<B$ map injectively into $\P^{N}(\Z/p^{\ell}\Z)$ for $p^{\ell} > 2B^2$.

            The LLL algorithm in Sage uses the parameters $\delta=3/4$ and $\eta=0.501$.
            From \cite[Proposition 1.6]{LLL} we get the bounds on the smallest resulting basis vector
            \begin{equation}\label{eq2}
                \abs{b_0} \leq 2^{N/4}d(L)^{1/(N+1)},
            \end{equation}
            where $d(L)$ is the determinant of the lattice. The determinant of the lattice does not depend on the choice of basis and can be computed as
            \begin{equation*}
                d(L) = \det(b_0,b_1,\ldots,b_N) = \det(\overline{P},p^{\ell},\ldots,p^{\ell}) \geq (p^{\ell})^{N+1}.
            \end{equation*}
            To change from the vector norm to the height, we have
            \begin{align*}
                \abs{b_i} &= \sqrt{\sum{(b_i)_i^2}}\\
                &\leq H(b_i)\sqrt{N+1}.
            \end{align*}
            Returning to (\ref{eq2}), we need
            \begin{equation*}
                2B^2 \sqrt{N+1} \leq 2^{N/4}p^{\ell}
            \end{equation*}
            and thus
            \begin{equation*}
                2B^2 \sqrt{N+1} \leq p^{\ell}.
            \end{equation*}
            The vector resulting from $LLL$ may not actually be the smallest vector in the lattice, so we need correct for that.  From \cite[Proposition 1.11]{LLL}, we have the bounds
            \begin{equation*}
                \abs{b_0}^2 \leq 2^{N}\abs{x}^2
            \end{equation*}
            for any $x$ in the lattice. So we need an additional factor of $2^{N/2}$.

            Therefore, we have
            \begin{align*}
                p^{\ell} \geq 2^{N/2 +1} B^2 \sqrt{N+1}
            \end{align*}
        \end{proof}

        When computing the $p$-adic approximation, if the Jacobian is invertible, we lift with Hensel's Lemma from $p^{\ell}$ to $p^{2\ell}$. If not, we try all possible lifts to $p^{\ell+1}$.

        \begin{lem}[Hensel's Lemma]
            Let $K$ be a number field with ring of integer $\O_K$ and $p$ a prime of $K$. Let $f:\A^{N}(\O_K) \to \A^N(\O_K)$. Suppose there exists $P_0$ such that
            \begin{equation*}
                f(P_0) \equiv (0,\ldots,0) \mod{p^{\ell}}
            \end{equation*}
            and $f'(P_0)$ is invertible. Then there exists a unique $P_1$ such that
            \begin{equation*}
                P = P_0 + P_1p^{\ell},
            \end{equation*}
            as vectors, such that
            \begin{equation*}
                f(P) \equiv (0,\ldots,0) \mod{p^{2\ell}}.
            \end{equation*}
        \end{lem}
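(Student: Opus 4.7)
The plan is to verify the statement by direct multivariable Taylor expansion, which terminates because $f$ is polynomial (it is the restriction of a morphism to affine coordinates). For each coordinate function $f_i$ I would write
\begin{equation*}
f_i(P_0 + p^\ell P_1) = f_i(P_0) + p^\ell \,\nabla f_i(P_0)\cdot P_1 + \sum_{|\alpha|\geq 2} \frac{D^\alpha f_i(P_0)}{\alpha!}\,(p^\ell P_1)^\alpha.
\end{equation*}
Every term in the sum carries a factor $p^{\ell|\alpha|}$ with $|\alpha|\geq 2$, so it lies in $p^{2\ell}\O_K$. Consequently the target congruence $f(P_0+p^\ell P_1)\equiv 0 \pmod{p^{2\ell}}$ is equivalent to the truncated linear one
\begin{equation*}
f(P_0) + p^\ell\, f'(P_0)\, P_1 \equiv 0 \pmod{p^{2\ell}}.
\end{equation*}

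Next I would use the hypothesis $f(P_0)\in p^\ell \O_K^N$ to write $f(P_0) = p^\ell Q$ with $Q$ integral at $p$ (passing to the localization $(\O_K)_p$ if $p^\ell$ is not principal in $\O_K$; this is harmless since the conclusion is a mod-$p^{2\ell}$ statement). Dividing the displayed congruence by $p^\ell$ reduces the problem to solving
\begin{equation*}
f'(P_0)\, P_1 \equiv -Q \pmod{p^\ell}.
\end{equation*}
Since $f'(P_0)$ is invertible, $\det f'(P_0)$ is a unit at $p$, so the adjugate formula produces an inverse of $f'(P_0)$ modulo $p^\ell$. This yields existence and uniqueness: $P_1 \equiv -f'(P_0)^{-1}Q \pmod{p^\ell}$, which is the desired $P_1$.

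There is essentially no obstacle here; the argument is the one-step Newton iteration that defines Hensel lifting. The only point requiring a little care is the bookkeeping around ``$f'(P_0)$ is invertible'' — it should be read as $\det f'(P_0)\in \O_K^\times$ at $p$, which automatically gives invertibility modulo every power $p^k$ by the adjugate formula — and the harmless localization needed to divide by $p^\ell$ when $\O_K$ is not a PID. Once those conventions are spelled out, the Taylor expansion plus one linear solve gives the lemma.
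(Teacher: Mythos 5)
Your argument is correct: it is the standard one-step Newton iteration, and the paper in fact states this lemma without proof (it is quoted as classical Hensel lifting), so there is nothing to diverge from. The only point worth making explicit is that the divided Taylor coefficients $D^\alpha f_i(P_0)/\alpha!$ of a polynomial with coefficients in $\O_K$ are themselves integral (they are the Hasse derivatives, i.e.\ the coefficients of $f_i(P_0+\mathbf{T})$ as a polynomial in $\mathbf{T}$), so no denominators are introduced and the higher-order terms genuinely lie in $p^{2\ell}\O_{K,p}$; with that remark, and the observation that $P_1$ is unique only modulo $p^\ell$ (which is the right reading of the statement), your proof is complete.
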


    \subsection{Rational preperiodic points}
        At this point, we have computed all the rational periodic points. We must now determine the rational points that are preperiodic but not periodic for $f$.
        \begin{defn}
            We say that $Q$ is an \emph{$n\tth$ preimage} of $P$ by $f$ if
            \begin{equation*}
                f^n(Q) =P.
            \end{equation*}
        \end{defn}
        Since every preperiodic point is eventually periodic, we know that some forward image of every preperiodic point must be periodic. Thus, by computing all the rational preimages of the rational periodic points, we arrive at the full set of rational preperiodic points.

        Given a point $P$, we solve the equation $f(Q)=P$ using elimination theory. Let $I$ be the ideal generated by $f(Q)-P$. Let $G$ be a Groebner basis of $I$ with respect to the lexicographic ordering on $K[X_1,\ldots,X_N]$. Then $G \cap K[X_j,\ldots,X_N]$ is a basis for $I \cap K[X_j,\ldots,X_N]$, the elimination ideals. In particular, we can find $I \cap K[x_N]$ and solve for the possible values of $X_N$. Then we work backwards, one variable at a time, until we arrive at the full set of solutions.
        \begin{exmp}
			For the map $f(z) = z^2 - 7/4$ we already know about the fixed point at infinity $(1:0)$ and the two-cycle $[(1:2) \to (-3:2)]$. We now compute pre-images to find the preperiodic points.
			
			The only rational preimage of $(1:0)$ is $(1:0)$.  The rational preimages of $(1:2)$ are $\{(-3:2),(3:2)\}$. The rational preimages of $(-3:2)$ are $\{(1:2),(-1:2)\}$.
			
			The points $(-1:2)$ and $(3:2)$ were not previously known, so we must find their rational preimages. They both have no rational preimages, so the final set of rational preimages is
			\begin{equation*}
				\{(1:0),(\pm 1:2),(\pm 3:2)\}.
			\end{equation*}
        \end{exmp}

\section{Generalized dynatomic cycles} \label{sect_dyn_cycles}
    In this section we present an object for studying the set of preperiodic points of a given period.
    We first recall the notion of a dynatomic cycle for $f: \P^N \to \P^N$.

    Let $K$ be an algebraically closed field and $f: \P^N_K \to\P^N_K$ be a morphism defined over $K$.  Consider the graph of $f^n$ in the product variety $\P^N \times \P^N$ defined as
    \begin{equation*}
        \Gamma_n = \{(P,f^{n}(P)) \col P \in \P^N\}
    \end{equation*}
    and the diagonal defined as
    \begin{equation*}
        \Delta = \{(P,P) \col P \in \P^N\}.
    \end{equation*}
    Their intersection is precisely the periodic points of period $n$, and we can determine the multiplicity as the multiplicity of the intersection.  Denote the intersection multiplicity of $\Gamma_n$ and $\Delta$ at a point $(P,P) \in \P^N \times \P^N$ to be $a_P(n)$ and, when the intersection is proper, the algebraic zero-cycle of periodic points of period $n$ as
    \begin{equation*}
        \Phi_n(f) = \sum_{P \in \P^N} a_P(n)(P).
    \end{equation*}
    Define
    \begin{equation*}
        a_P^{\ast}(n) = \sum_{d \mid n} \mu\left(\frac{n}{d}\right) a_P(d)
    \end{equation*}
    and
    \begin{equation*}
        \Phi_n^{\ast}(f) = \sum_{d \mid n} \mu\left(\frac{n}{d}\right)
        \Phi_d(f) = \sum_{P \in X} a_{P}^{\ast}(n) (P),
    \end{equation*}
    where $\mu$ is the M\"{o}bius function.
    \begin{defn}
        We call $\Phi^{\ast}_n(\phi)$ the \emph{$n\tth$ dynatomic cycle}. If
        $a_P^{\ast}(n) >0$, then we call $P$ a periodic point of \emph{formal} period $n$.
    \end{defn}

    We now generalize this construction to preperiodic points.
    \begin{defn}
        A preperiodic point $P$ of \emph{period $(m,n)$} satisfies $f^{n+m}(P) = f^m(P)$. We call $m$ the \emph{preperiod} of $P$. Note that a point with period $(m,n)$ is also a point with period $(m+t, kn)$ for any $t,k \in \N$.

        A point $P$ has \emph{minimal period $(m,n)$} if $P$ has preperiod exactly $m$ and $f^m(P)$ has minimal period $n$.

        Define the \emph{generalized $(m,n)$-period cycle} as
        \begin{equation*}
            \Phi_{m,n}(f) = \Gamma_{n+m} \cap \Gamma_m.
        \end{equation*}
        The points in its support have preperiod at most $m$ and their $m\tth$ iterates have period $n$. We are interested in the points with minimal period $(m,n)$. Define
        \begin{align*}
            \Phi_{m,n}^{\ast}(f) &=\sum_{d \mid n} \mu(n/d)\big((\Gamma_{m+d} \cap \Gamma_m) - (\Gamma_{m+d-1} \cap \Gamma_{m-1})\big).
        \end{align*}
    \end{defn}
    \begin{rem}
        For $f:\P^1 \to \P^1$, $\Phi_{m,n}(f)$ and $\Phi^{\ast}_{m,n}(f)$ have the following representations. Let $F_n,G_n$ be homogeneous polynomials such that $f^n = [F_n,G_n]$. Then, $\Phi_{m,n}(f)$ and $\Phi^{\ast}_n(f)$ are polynomials in two variables. We obtain
        \begin{align*}
            \Phi_{m,n}(f) &= \Phi_n(F_m,G_m) = G_mF_{n+m} - F_mG_{n+m}\\
            \Phi^{\ast}_{m,n}(f) &= \frac{\Phi_n^{\ast}(F_m,G_m)}{\Phi_n^{\ast}(F_{m-1},G_{m-1})}.
        \end{align*}
        $\Phi_{m,n}(f)$ is clearly a polynomial, and effectivity (Theorem \ref{thm_preperiodic_dynatomic}\ref{thm_effectitivity_c}) is the statement that $\Phi^{\ast}_{m,n}(f)$ is also a polynomial.
    \end{rem}

    \begin{defn}
        Points whose multiplicity is non-zero in $\Phi^{\ast}_{m,n}(f)$ are called \emph{formal preperiodic points with formal period $(m,n)$}.
    \end{defn}
    \begin{defn}
        For $n >m \geq 1$ we say that $f$ is $(m,n)$-\emph{nondegenerate} if $\Gamma_{k+d}$ and $\Gamma_k$ intersect properly for all $d \mid n$ and $0 \leq k \leq m$, where $\Gamma_0 = \Delta = \{(P,P) \col P \in \P^N\}$, the diagonal.
    \end{defn}

    \begin{thm}\label{thm_preperiodic_dynatomic}
        Let $f:X\to X$ be $(m,n)$-nondegenerate.
        \begin{enumerate}
            \item Points $P$ are in the support of $\Phi_{m,n}(f)$ if and only if $P$ is preperiodic with preperiod at most $m$ and $f^m(P)$ has period $n$.
            \item $\Phi_{m,n}(f) - \Phi_{m-1,n}(f)$ is effective. \label{thm_effectitivity_b}
            \item $\Phi^{\ast}_{m,n}$ is effective. \label{thm_effectitivity_c}
            \item Points $P$ in the support of $\Phi^{\ast}_{m,n}(f)$ satisfy $f^m(P)$ has formal period $n$.
        \end{enumerate}
    \end{thm}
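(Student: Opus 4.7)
My plan is to reduce parts (a)--(d) to a single pullback formula. The first projection $\pi\col \P^N \times \P^N \to \P^N$ restricts to an isomorphism on $\Gamma_m$, so we may identify the zero-cycle $\Phi_{m,n}(f) = \Gamma_{m+n} \cap \Gamma_m$ with a zero-cycle on $\P^N$. Locally, this intersection is cut out by $y - f^m(x)$ and $y - f^{m+n}(x)$; eliminating $y$ yields $V\bigl(f^n(f^m(x)) - f^m(x)\bigr)$, which is the scheme-theoretic preimage $(f^m)^{-1}V(f^n(y) - y)$. Since $f^m$ is finite and flat (being a finite morphism between smooth varieties), the standard pullback formula for cycles under finite flat morphisms gives
\begin{equation*}
	\operatorname{mult}_P \Phi_{m,n}(f) = a_{f^m(P)}(n) \cdot e_P(f^m),
\end{equation*}
where $e_P(f^m) \geq 1$ is the local degree. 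Part (a) is then immediate: $P$ is in the support if and only if $a_{f^m(P)}(n) > 0$, equivalently, $f^m(P)$ has period $n$.

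The next ingredient is the orbit-invariance of multiplicities: $a_Q(n) = a_{f(Q)}(n)$, hence also $a^{\ast}_Q(n) = a^{\ast}_{f(Q)}(n)$ by M\"obius summation. Setting $V := V(f^n(y) - y)$, I would show that $f$ restricts to a scheme morphism $f|_V \col V \to V$: the generators $f_i(f^n(y)) - f_i(y)$ of $I_{f^{-1}V}$ lie in $I_V$ because $f^n(y) \equiv y \pmod{I_V}$ and each $f_i$ is a polynomial. Moreover, $f^n|_V$ sends $y_i$ to $(f^n)_i \equiv y_i \pmod{I_V}$, so $f^n|_V = \id_V$; hence $f|_V$ has two-sided inverse $f^{n-1}|_V$, making it a scheme-theoretic automorphism of $V$. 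Automorphisms preserve lengths of local rings, which gives the required invariance at every $Q$.

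Combining these with the chain rule for local degrees, $e_P(f^m) = e_P(f^{m-1}) \cdot e_{f^{m-1}(P)}(f)$, the multiplicity of $P$ in $\Phi_{m,n}(f) - \Phi_{m-1,n}(f)$ simplifies to
\begin{equation*}
	a_{f^m(P)}(n) \cdot e_P(f^{m-1}) \cdot \bigl(e_{f^{m-1}(P)}(f) - 1\bigr) \geq 0,
\end{equation*}
proving (b). Exchanging the M\"obius summation with pullback yields $\Phi^{\ast}_{m,n}(f) = (f^m)^{\ast} \Phi^{\ast}_n(f) - (f^{m-1})^{\ast} \Phi^{\ast}_n(f)$, and the identical computation with $a^{\ast}$ in place of $a$ gives (c). Part (d) is then read off: if $\operatorname{mult}_P \Phi^{\ast}_{m,n}(f) \ne 0$, then $a^{\ast}_{f^m(P)}(n) \ne 0$, so $f^m(P)$ has formal period $n$. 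I expect the main obstacle to be confirming the pullback formula with the correct intersection multiplicities rather than just on underlying sets; this is a standard fact for finite flat morphisms, but requires careful local bookkeeping using the isomorphism $\pi|_{\Gamma_m}$ to identify the scheme-theoretic intersection in $\P^N \times \P^N$ with the pulled-back scheme on $\P^N$.
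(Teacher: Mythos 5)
Your route is genuinely different from the paper's and, once two points are made explicit, it works. The paper deforms $f$ to $F(\textbf{X},t)=[F_1(\textbf{X})+t,\ldots,F_N(\textbf{X})+t]$, uses flatness over $K[[t]]$ (Proposition \ref{prop_flat}) to split each multiplicity into generically simple points, and evaluates the M\"obius sum termwise on the perturbed points. You instead prove the structural identity $\Phi_{m,n}(f)=(f^m)^{\ast}\Phi_n(f)$ at the level of cycles, i.e.\ $a_P(m,n)=e_P(f^m)\,a_{f^m(P)}(n)$, using that $f^m$ is finite flat (finite since $f^{\ast}\O(1)=\O(d)$ is ample, flat by miracle flatness for a finite map of smooth varieties) together with the length formula for flat local base change; your automorphism argument for the orbit-invariance $a_Q(n)=a_{f(Q)}(n)$ on $V(f^n(y)-y)$ is correct and replaces the role played in the paper by Lemma \ref{lem2} and the deformation. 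Do note that identifying $a_P(m,n)$ with the naive length of $(f^m)^{-1}V(f^n(y)-y)$ at $P$ (rather than Serre's Tor formula) uses the $(m,n)$-nondegeneracy through Proposition \ref{prop_Tor_i}, and that the invariance $a_Q(n)=a_{f(Q)}(n)$ is only asserted for $Q$ in $V(f^n(y)-y)$. The payoff of your approach is a closed formula for every multiplicity, from which (a), (b) and (d) fall out immediately.

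Two corrections are needed. First, your displayed formula for the multiplicity of $P$ in $\Phi_{m,n}(f)-\Phi_{m-1,n}(f)$ is valid only when $f^{m-1}(P)$ is itself periodic; if $P$ has preperiod exactly $m$, then $a_P(m-1,n)=0$ and the difference is $e_P(f^m)a_{f^m(P)}(n)$, which is still nonnegative but is not given by your expression --- separate the two cases. Second, and more substantively: in (b) the nonnegativity of the common factor $a_{f^m(P)}(n)$ is automatic because it is a length, but the ``identical computation'' for (c) leaves you with the factor $a^{\ast}_{f^m(P)}(n)$, and its nonnegativity is precisely the effectivity of the \emph{classical} dynatomic cycle $\Phi^{\ast}_n(f)$ --- a nontrivial theorem, not a formal consequence of your setup. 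Your proof of (c) is therefore a reduction of the generalized statement to the classical one and must invoke \cite{Hutz1, Hutz8} at that point. That is a legitimate, and arguably cleaner, logical structure than the paper's self-contained deformation argument, but as written the dependency is invisible and the argument for (c) does not close without it.
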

    To prove Theorem \ref{thm_preperiodic_dynatomic} we use methods similar to \cite{Hutz8}. Let $R_P$ be the local ring of the product $(P,P)$ in $\P^N \times \P^N$. We first prove that we need only the naive intersection theory. Serre's definition of intersection theory is the following.
    \begin{equation*}
        i(\Gamma_{n+m},\Gamma_m;P) = \sum_{i=0}^{b -1} (-1)^{i}
        \dim_K(\tor_{i}(R_P/I_{\Gamma_{n+m}},R_P/I_{\Gamma_m})).
    \end{equation*}
    Note that for this definition to work, we must have a representation of $f$ which is defined for all points $\{f^k(P) \col 0 \leq k \leq n+m\}$. In \cite{Hutz8} this was done by replacing $f$ with $f^n$ to be able to work only with fixed points. Since we are dealing with a preperiod, that method is not possible here. However, since we are working with $f:\P^N \to \P^N$ we already have a global representation for the map. Since the set $\{f^k(P) \col 0 \leq k \leq n+m\}$ is a finite set of points, we can find a hyperplane that does not intersect this set and, by conjugating, move this hyperplane to $x_0=0$. Then we can dehomogenize $f$ to $F\left(\frac{x_1}{x_0},\ldots,\frac{x_N}{x_0}\right) = F(X_1,X_2,\ldots,X_N)=F(\textbf{X})$. We can then write the coordinate functions of $F$ as power series in $K[[\textbf{X}]]$.

    \begin{lem}\textup{\cite[Corollary to Theorem V.B.4]{Serre}}\label{lem1}
        Let $(R,\mathfrak{m})$ be a regular local ring of dimension $b$, and let $M$ and $N$ be two
        non-zero finitely generated $R$-modules such that $M \otimes N$ is of finite
        length.  Then $\tor_i(M,N) = 0$ for all $i>0$ if and only if $M$ and $N$ are
        Cohen-Macaulay modules and $\dim M + \dim N = b$.
    \end{lem}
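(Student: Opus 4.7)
The plan is to deduce the result from a tight dimension count assembled from three standard tools: Serre's intersection inequality $\dim M + \dim N \leq b$ (Theorem V.B.4, the theorem this is a corollary of), the Auslander--Buchsbaum formula $\text{pd}(X) + \depth(X) = \depth(R) = b$ for every finitely generated $R$-module $X$ (all have finite projective dimension since $R$ is regular), and the fact that $\depth X \leq \dim X$ always holds.

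For the direction $(\Rightarrow)$, assume $\tor_i(M,N) = 0$ for all $i > 0$. I would first dispose of the trivial case $M \otimes_R N = 0$ separately, so assume $M \otimes_R N \neq 0$. Because this module has finite length and is supported only at $\mathfrak{m}$, its depth is $0$. Under the vanishing hypothesis, one has the depth formula
\[
\depth(M \otimes_R N) = \depth M + \depth N - \depth R,
\]
which is proved by resolving $M$ by a minimal free resolution (whose length is finite by Auslander--Buchsbaum), tensoring with $N$, and tracking depths through the resulting acyclic complex. Plugging in gives $\depth M + \depth N = b$. Combining with $\depth M \leq \dim M$, $\depth N \leq \dim N$, and Serre's $\dim M + \dim N \leq b$ forces equality throughout: $\depth M = \dim M$, $\depth N = \dim N$, and $\dim M + \dim N = b$. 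Thus both $M$ and $N$ are Cohen--Macaulay of complementary dimension.

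For the converse direction $(\Leftarrow)$, assume $M, N$ are Cohen--Macaulay with $\dim M + \dim N = b$. Auslander--Buchsbaum gives $\text{pd}(M) = b - \depth M = b - \dim M = \dim N$, so $\tor_i(M,N) = 0$ automatically for $i > \dim N$. To kill the positive-degree Tor's below this range, I would induct on $\dim M$. In the base case $\dim M = 0$, Cohen--Macaulayness and finite length of $M$ force $M$ to already have depth zero, while $\dim N = b$; after reducing to the case that $M$ is a module over $R/\text{Ann}(M)$, one can compare free resolutions directly. For the inductive step, prime avoidance produces $x \in \mathfrak{m}$ that is simultaneously $R$-regular and $M$-regular (using that $M$ is CM of positive dimension and that $R$ is a domain). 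The short exact sequence $0 \to M \xrightarrow{x} M \to M/xM \to 0$ tensored with $N$ yields a long exact sequence of Tor's; since $M/xM$ is Cohen--Macaulay of dimension $\dim M - 1$ and $\dim(M/xM) + \dim N = b - 1$, one applies the inductive hypothesis in the regular local ring $R/(x)$ (also of dimension $b-1$), using base-change to identify the relevant Tor groups.

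The forward direction is essentially a bookkeeping argument once the depth formula for Tor is cited. The main obstacle is the converse: carefully checking that the regular element $x$ can be chosen to preserve the Cohen--Macaulay hypothesis and the dimension-complementarity condition after reduction, and verifying the base-change identification of $\tor^R(M,N)$ with $\tor^{R/(x)}(M/xM, N)$ via the Koszul spectral sequence (which is clean precisely because $x$ is regular on $M$ and on $R$). This is where the interplay of Cohen--Macaulayness and the hypotheses of the theorem is essential, and also why the result fails outside the regular setting.
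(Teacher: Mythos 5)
This lemma is quoted directly from Serre's \emph{Local Algebra}; the paper supplies no proof, so your argument has to be judged against the standard one rather than against anything in the text. Your forward direction is sound: Auslander's depth formula $\depth(M\otimes N)=\depth M+\depth N-\depth R$ (valid because $\operatorname{pd}M<\infty$ over the regular ring and the higher Tors vanish), together with $\depth(M\otimes N)=0$ for a nonzero finite-length module, $\depth\le\dim$, and the dimension inequality $\dim M+\dim N\le b$ of Theorem V.B.4, forces equality everywhere. (The case $M\otimes_RN=0$ that you set aside never occurs: for nonzero finitely generated modules over a local ring, Nakayama gives $M\otimes N\ne 0$.)

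The converse as you have set it up does not go through, and the failure is in the dimension bookkeeping of the induction. To invoke the inductive hypothesis over $R/(x)$ you must replace $N$ by an $R/(x)$-module; the base-change isomorphism $\tor^R_i(M/xM,N)\cong\tor^{R/(x)}_i(M/xM,N/xN)$ requires $x$ to be regular on $N$ as well, and then $\dim(M/xM)+\dim(N/xN)=b-2<b-1=\dim R/(x)$, so the pair $(M/xM,\,N/xN)$ no longer satisfies the hypotheses of the lemma over $R/(x)$. Worse, the vanishing you would need is actually false there: $(M/xM)\otimes(N/xN)\ne 0$ has finite length, so Serre's vanishing theorem makes the Euler characteristic $\sum(-1)^i\ell(\tor_i)$ over $R/(x)$ equal to zero, which forces some $\tor_i\ne 0$ with $i>0$. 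Keeping $N$ un-reduced fares no better: the long exact sequence shows $\tor^R_1(M/xM,N)$ contains the kernel of multiplication by $x$ on the nonzero finite-length module $M\otimes N$, hence is nonzero. (You would also need $x\notin\mathfrak m^2$ for $R/(x)$ to be regular, a point you omit.) The standard repair abandons the induction entirely: since $\supp M\cap\supp N=\{\mathfrak m\}$, every $\tor_i(M,N)$ has finite length, hence depth $0$ when nonzero; tensor a minimal free resolution $F_\bullet$ of $M$, whose length is $\operatorname{pd}M=b-\depth M=b-\dim M=\dim N=\depth N$, with $N$, and apply the Peskine--Szpiro acyclicity lemma to the complex of modules $N^{r_i}$ (each of depth $\ge i$) to conclude $\tor_i(M,N)=0$ for all $i\ge 1$.
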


    \begin{prop} \label{prop_Tor_i}
        Let $f:\P^N \to \P^N$ be a morphism defined over $K$ such that $f$
        is $(m,n)$-nondegenerate.  Let $P \in \P^N(K)$.  Then, $\tor_i(R_P/I_{\Gamma_{n+m}},R_P/I_{\Gamma_m}) =0$ for all $i>0$.
    \end{prop}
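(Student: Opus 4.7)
The plan is to apply Serre's criterion (Lemma~\ref{lem1}) directly to $M = R_P/I_{\Gamma_{n+m}}$ and $N = R_P/I_{\Gamma_m}$ inside the regular local ring $R_P$ of $(P,P)$ in $\P^N \times \P^N$. Using the hyperplane/dehomogenization trick from the paragraph preceding the proposition, I can assume $f$ is represented at each relevant point by convergent power series, so that $R_P$ is a regular local $K$-algebra of dimension $2N$. Serre's criterion then requires three things: that $M$ and $N$ are Cohen--Macaulay, that $\dim M + \dim N = 2N$, and that $M \otimes_{R_P} N$ is of finite length.

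First I would handle the Cohen--Macaulay and dimension conditions simultaneously by observing that each graph $\Gamma_k$ is isomorphic to $\P^N$ via the first projection $\pi_1 \col \Gamma_k \to \P^N$, whose inverse is the morphism $Q \mapsto (Q, f^k(Q))$. In particular, $\Gamma_{n+m}$ and $\Gamma_m$ are smooth closed subvarieties of $\P^N \times \P^N$, each of dimension $N$, so their local rings $M$ and $N$ at $(P,P)$ are regular (hence Cohen--Macaulay) of dimension $N$. This gives $\dim M + \dim N = 2N = \dim R_P$, as required.

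Next, I would use the $(m,n)$-nondegeneracy hypothesis to secure the finite-length condition. By definition, $(m,n)$-nondegeneracy forces $\Gamma_{n+m}$ and $\Gamma_m$ to intersect properly, meaning
\begin{equation*}
    \codim\bigl(\Gamma_{n+m} \cap \Gamma_m\bigr) = \codim \Gamma_{n+m} + \codim \Gamma_m = 2N,
\end{equation*}
so the scheme-theoretic intersection is zero-dimensional at $(P,P)$. Therefore $M \otimes_{R_P} N = R_P/(I_{\Gamma_{n+m}} + I_{\Gamma_m})$ is an Artinian local ring, in particular of finite length. All hypotheses of Lemma~\ref{lem1} are now met, yielding $\tor_i(M, N) = 0$ for all $i > 0$. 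I expect the one subtle point to be the initial local setup: ensuring that a single dehomogenization of $f$ provides coherent power series representatives for the coordinate functions of $f, f^2, \ldots, f^{n+m}$ at $(P,P)$, which is exactly what the hyperplane-avoidance argument in the paragraph before the proposition guarantees. Beyond that, the argument is dimension counting plus the smoothness of graphs of morphisms.
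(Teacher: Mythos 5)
Your proof is correct and follows the same overall strategy as the paper's: both reduce the statement to Serre's criterion (Lemma~\ref{lem1}) by checking that the two quotients are Cohen--Macaulay of dimension $N$ and that $R_P/I_{\Gamma_{n+m}} \otimes R_P/I_{\Gamma_m}$ has finite length, the latter being exactly the proper-intersection content of $(m,n)$-nondegeneracy. The one place you diverge is the Cohen--Macaulay step. The paper derives it from the intersection hypothesis: since the intersection is zero-dimensional at $(P,P)$, the $2N$ generators of $I_{\Gamma_{n+m}}+I_{\Gamma_m}$ form a system of parameters for the regular (hence Cohen--Macaulay) local ring $R_P$, and a quotient of a Cohen--Macaulay local ring by part of a system of parameters is again Cohen--Macaulay of the expected dimension. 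You instead observe that each graph $\Gamma_k$ is isomorphic to $\P^N$ via the first projection, so that locally $R_P/I_{\Gamma_k}\cong K[[X_1,\ldots,X_N]]$ is regular, hence Cohen--Macaulay of dimension $N$, with no appeal to the intersection hypothesis at that stage. Your route is slightly more direct and is available precisely because the source is $\P^N$, making the graphs smooth; the paper's system-of-parameters argument is the one that survives when the ambient variety is allowed to be singular, as in the more general setting of \cite{Hutz8} from which this proof is adapted. Both arguments rely on the same hyperplane-avoidance dehomogenization to get coherent power-series representatives of the iterates, and your handling of the finite-length condition via the codimension count is exactly the paper's.
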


    \begin{proof}
        We have $\dim{\P^N \times \P^N} = 2N$ and $\dim{\Gamma_{n+m}}=\dim{\Gamma_m}=N$.  The ideals $I_{\Gamma_{n+m}}$ and $I_{\Gamma_m}$ are each generated by $N$ elements and $\Gamma_{n+m}$ and $\Gamma_m$ intersect properly.  Therefore,
        \begin{equation*}
            \dim_K(R_P/(I_{\Gamma_{n+m}} + I_{\Gamma_m})) = \text{length}(R_P/I_{\Gamma_{n+m}} \otimes R_P/I_{\Gamma_m}) < \infty.
        \end{equation*}

        Thus, the union of the generators of $I_{\Gamma_{n+m}}$ and the generators of $I_{\Gamma_m}$ are a system of parameters for $R_P$ \cite[Proposition III.B.6]{Serre}. Consequently, since the local ring $R_P$ is Cohen-Macaulay we can conclude that $R_P/I_{\Gamma_{n+m}}$ is Cohen-Macaulay of dimension $N$ \cite[Corollary to Theorem IV.B.2]{Serre}.  Similarly with $I_{\Gamma_m}$, we conclude that $R_P/I_{\Gamma_n}$ is Cohen-Macaulay of
        dimension $N$.

        We have fulfilled the hypotheses of Lemma \ref{lem1} and can conclude the result.
    \end{proof}
    Proposition \ref{prop_Tor_i} implies that
    \begin{equation*}
        i(\Gamma_{n+m},\Gamma_m;P) = \dim_K(\tor_{0}(R_P/I_{\Gamma_{n+m}},R_P/I_{\Gamma_m})),
    \end{equation*}
    which is the codimension of the ideal $(I_{\Gamma_{n+m}} + I_{\Gamma_m})$.
    Recall that we can compute the codimension of an ideal from its leading term ideal:
    \begin{equation*}
        K[[X_1,\ldots,X_N]]/I \cong_K \Span(X^{v} \mid X^{v} \not\in LT(I)).
    \end{equation*}

    \begin{lem}\textup{\cite[Corollary 6.9]{Eisenbud2}} \label{lem_flatness}
        Suppose that $(R,\mathfrak{m})$ is a local Noetherian ring.  Let $x \in R$ be a non-zero divisor on $R$ and let $M$ be a finitely generated $R$-module.  If $x$ is a non-zero divisor on $M$, then $M$ is flat over $R$ if and only if $M/xM$ is flat over $R/(x)$.
    \end{lem}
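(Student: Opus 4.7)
The plan is to prove the two directions separately, using the local criterion of flatness as the main engine.

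For the forward direction, flatness is preserved under base change: if $M$ is flat over $R$, then $M/xM = M \otimes_R R/(x)$ is automatically flat over $R/(x)$. This direction does not use the non-zero-divisor hypotheses.

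For the backward direction, I would invoke the local criterion of flatness, which says that since $R$ is Noetherian local and $M$ is finitely generated, it suffices to show $\tor_1^R(k, M) = 0$ for $k = R/\mathfrak{m}$. The bridge between the two rings is the short exact sequence
\begin{equation*}
0 \to R \xrightarrow{\cdot x} R \to R/(x) \to 0,
\end{equation*}
which, because $x$ is a non-zero-divisor on $R$, is a length-$1$ free resolution of $R/(x)$. Tensoring with $M$ and using that $x$ is a non-zero-divisor on $M$, the resulting long exact sequence of Tor yields $\tor_q^R(R/(x), M) = 0$ for every $q \geq 1$.

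Next I would take any free resolution $F_\bullet \to M$ over $R$. Each $F_i$ is $R$-free, so $x$ is a non-zero-divisor on $F_i$. Because the homology of $F_\bullet \otimes_R R/(x) = F_\bullet/xF_\bullet$ in positive degrees is precisely $\tor_q^R(R/(x), M)$, which vanishes by the previous paragraph, the reduced complex $F_\bullet/xF_\bullet$ is a free resolution of $M/xM$ over $R/(x)$. Since $k$ is naturally an $R/(x)$-module ($x \in \mathfrak{m}$), this single complex computes both Tors, giving the comparison
\begin{equation*}
\tor_p^R(k, M) \;\cong\; \tor_p^{R/(x)}(k, M/xM) \qquad \text{for all } p \geq 0.
\end{equation*}
Flatness of $M/xM$ over $R/(x)$ makes the right-hand side vanish at $p=1$, hence $\tor_1^R(k, M) = 0$, and the local criterion concludes that $M$ is flat over $R$.

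The main obstacle is verifying that $F_\bullet/xF_\bullet$ really is a resolution of $M/xM$, which is the core content of the change-of-rings comparison. This reduces cleanly to the vanishing $\tor_q^R(R/(x), M) = 0$ for $q \geq 1$, and that vanishing is exactly where both non-zero-divisor hypotheses on $x$ enter. Once that identification is in place, combining it with the local flatness criterion is routine.
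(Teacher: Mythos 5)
This lemma is not proved in the paper: it is quoted verbatim from the cited reference (Eisenbud, Corollary 6.9) and used as a black box, so there is no in-paper argument to compare against. Your proof is correct and is essentially the standard proof of that corollary: the forward direction is base change; for the converse, the two non-zero-divisor hypotheses give $\tor_q^R(R/(x),M)=0$ for all $q\geq 1$ via the length-one free resolution of $R/(x)$, so a free resolution $F_\bullet$ of $M$ over $R$ reduces modulo $x$ to a free resolution of $M/xM$ over $R/(x)$, yielding the change-of-rings isomorphism $\tor_1^R(k,M)\cong\tor_1^{R/(x)}(k,M/xM)$, and the local criterion of flatness (finitely generated plus $\tor_1(k,M)=0$ over a Noetherian local ring implies free, hence flat) closes the argument. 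I see no gaps; the only point worth stating explicitly is that $M/xM$ is finitely generated over the Noetherian local ring $R/(x)$, which is immediate and is what licenses the final appeal to the local criterion in both directions of the comparison.
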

    Effectivity is a local property, so we can consider each point $P \in \P^N(K)$ separately. We dehomogenize and move $P$ to the origin. By abuse of notation, we will still consider the power series representation of $f$ at $P$ as $F(\textbf{X})=[F_1(\textbf{X}),\ldots,F_N(\textbf{X})]$. The method is to deform the algebraic zero-cycle to get a zero-cycle where all points are multiplicity $1$.
    We consider $Z_{m,n,P}$ to be the algebraic zero-cycle obtained by intersecting the equations for $\Gamma_{n+m}= (Y_1-F_1^{n+m}(\textbf{X}), \ldots, Y_N-F_N^{n+m}(\textbf{X}))$ and the graph $\Gamma_m = (Y_1-F_1^m(\textbf{X}), \ldots, Y_N-F_N^m(\textbf{X}))$ as analytic varieties (in $R_P$).  We deform $Z_{m,n,P}$ by considering the iterates of
    \begin{equation*}
        F(\textbf{X},t)=[F_1(\textbf{X})+t,\ldots,F_N(\textbf{X})+t]
    \end{equation*}
    for a parameter $t \in \A^1_K$ and their graphs, denoted $\Gamma_n(t)$.  We denote the deformed family as $Z_{m,n,P}(t)$.  Notice that we are deforming and then iterating so that $Z_{m,n,P}(t)$ is associated to $(F(\textbf{X},t))^{n+m}$ and $(F(\textbf{X},t))^m$.
    \begin{prop} \label{prop_flat}
        Let $n,m \in \N$ be such that $f$ is $(m,n)$-nondegenerate and let $P \in \P^N(K)$.  The family $Z_{m,n,P}(t)$ is flat over $K[[t]]$.
    \end{prop}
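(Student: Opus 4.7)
The plan is to apply Lemma \ref{lem_flatness} with $R = K[[t]]$ and $x = t$ in order to reduce flatness of the family $Z_{m,n,P}(t)$ to the statement that $t$ is a non-zero divisor on the corresponding module, and then verify that condition using the Cohen--Macaulay structure of a regular local ring.

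First, after the dehomogenization and translation already carried out so that $P$ is the origin and every point of the orbit $\{f^j(P) : 0 \leq j \leq n+m\}$ lies in the chosen affine chart, I would form the complete local ring
\begin{equation*}
S = K[[t, X_1, \ldots, X_N, Y_1, \ldots, Y_N]]
\end{equation*}
of $(P,P)$ in $\P^N \times \P^N \times \A^1$ at $t=0$, together with the ideal
\begin{equation*}
J_t = \bigl(Y_i - F_i^{n+m}(\textbf{X}, t),\ Y_i - F_i^m(\textbf{X}, t)\bigr)_{1 \leq i \leq N} \subset S,
\end{equation*}
where $F(\textbf{X}, t) = (F_1(\textbf{X}) + t, \ldots, F_N(\textbf{X}) + t)$. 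Setting $M = S/J_t$, the local ring of $Z_{m,n,P}(t)$ at $(P,P,0)$, the claim reduces to showing that $M$ is flat as a $K[[t]]$-module. Since $K[[t]]$ is a discrete valuation ring, Lemma \ref{lem_flatness} reduces this to showing that $t$ is a non-zero divisor on $M$ (the quotient $M/tM$ is automatically flat over the residue field $K$).

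Second, I would exploit the fact that $S$ is regular local of dimension $2N+1$, hence Cohen--Macaulay. Reducing the ideal $J_t$ modulo $t$ yields $I_{\Gamma_{n+m}} + I_{\Gamma_m}$ in $R_P$, and by the $(m,n)$-nondegeneracy hypothesis this intersection is proper, so $S/(J_t + (t))$ has Krull dimension $0$. Hence $J_t + (t)$ has codimension exactly $2N+1 = \dim S$ in $S$, so its $2N+1$ generators form a system of parameters; in a Cohen--Macaulay local ring, any system of parameters is a regular sequence in every order. In particular, the $2N$ generators of $J_t$ alone form a regular sequence, so $M$ is a Cohen--Macaulay $S$-module of dimension $1$. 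Because $M/tM$ has dimension $0 < \dim M$, $t$ avoids every associated prime of $M$ (a Cohen--Macaulay module has no embedded primes), so $t$ acts as a non-zero divisor on $M$, and Lemma \ref{lem_flatness} yields the flatness.

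The main obstacle is the codimension count: verifying that $J_t + (t)$ really has maximal codimension $2N+1$ in $S$. This is where $(m,n)$-nondegeneracy is essential, since it is exactly the hypothesis that the fiber at $t=0$ is a proper intersection; once that codimension is in hand, Cohen--Macaulayness forces the generators of $J_t$ to be a regular sequence and the remaining deduction is a standard miracle-flatness argument over a DVR. A secondary technicality is that each iterate $F_i^k(\textbf{X}, t)$ must be a well-defined element of $K[[t, \textbf{X}]]$, which is guaranteed by the choice of affine chart containing the entire forward orbit $\{f^j(P) : 0 \leq j \leq n+m\}$ made in the preceding paragraph.
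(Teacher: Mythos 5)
Your proposal is correct, and it follows the paper's overall strategy: both arguments invoke Lemma \ref{lem_flatness} with $R = K[[t]]$ and $x = t$, observe that the fiber $M/tM$ at $t=0$ recovers the undeformed intersection $\widehat{R}_P/(I_{\Gamma_{n+m}} + I_{\Gamma_m})$ (flat over $K$ for free), and thereby reduce everything to showing that $t$ is a non-zero divisor on $M$. Where you diverge is in how that last condition is verified. The paper argues directly: it supposes $tb = \sum a_i b_i$ with $b \neq 0$ in $M$, specializes to $t=0$, and derives either that $b$ already lies in the ideal or that there is a relation among the specialized generators $(b_i)_{t=0}$, which it declares incompatible with $(m,n)$-nondegeneracy. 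You instead run the standard miracle-flatness/Cohen--Macaulay argument: nondegeneracy makes the $2N+1$ elements (generators of $J_t$ together with $t$) a system of parameters for the regular local ring $S = K[[t,\textbf{X},\textbf{Y}]]$, hence a regular sequence in any order, so $M = S/J_t$ is Cohen--Macaulay of dimension $1$ and $t$, cutting the dimension to $0$, avoids all associated primes. Your route is arguably tighter: the paper's phrase ``a relation among the $(b_i)_{t=0}$ contradicts nondegeneracy'' is really shorthand for exactly the regular-sequence fact you make explicit (and which the paper itself establishes in the proof of Proposition \ref{prop_Tor_i}), whereas your version isolates that fact cleanly and uses only textbook properties of Cohen--Macaulay local rings. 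The trade-off is that the paper's specialization argument is more self-contained at this point in the text, while yours leans on (and makes visible) the same Serre-style machinery already deployed for the $\tor$-vanishing. One tiny point worth a sentence in a final write-up: if $P$ is not in the support of the intersection then $M = 0$ and flatness is vacuous, so one may assume $M \neq 0$ when speaking of its associated primes.
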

    \begin{proof}
        Working locally at $P$, $Z_{m,n,P} = a_{P}(m,n)(P)$ with $a_P(m,n) = \dim_K \widehat{R}_P/(I_{\Gamma_{n+m}} + I_{\Gamma_n})$.  Thus, to show flatness for $Z_{m,n,P}(t)$, we need to show flatness for $\widehat{R}_P[[t]]/(I_{\Gamma_{n+m}}(t) + I_{\Gamma_m}(t))$.

        We apply Lemma \ref{lem_flatness} with $M = \widehat{R}_P[[t]]/(I_{\Gamma_{n+m}}(t) + I_{\Gamma_m}(t))$, $R = K[[t]]$, and $x=t$.
        We see that $M/tM \cong \widehat{R}_P/(I_{\Gamma_{n+m}}(t) + I_{\Gamma_m}(t))$ from our choice of deformation and $K[[t]]/(t) \cong K$.  Thus, $M/tM$ is a flat $K$-module since it is a finite dimensional $K$-vector space by the $(m,n)$-nondegeneracy of $f$.  Now, we just need to show that $t$ is not a zero divisor on $M$.

        Assume that $t$ is a zero divisor.  Then, there exists a $b \in M$ with $b \neq 0$ such that $tb =0$.  In particular, there exist $a_i \in K[[t]]$ such that \[tb = \sum_{i=1}^{2N} a_i b_i,\] where $b_i$ are the generators of $(I_{\Gamma_{n+m}}(t) + I_{\Gamma_m}(t))$.  Specializing to $t=0$, we must have \[\left(\sum_{i=1}^{2N} a_i b_i\right)_{t=0} = 0,\] with $(b_i)_{t=0} \neq 0$ for all $i$.  Assume that $(a_i)_{t=0} = 0$ for all $i$; then we have
         \[
            \sum_{i=1}^{2N} \frac{a_i}{t}b_i = b
         \]
         with $\frac{a_i}{t} \in K[[t]]$.  This contradicts $b \not\in (I_{\Gamma_{n+m}(t)} + I_{\Gamma_m}(t))$, so we have at least one $(a_i)_{t=0} \neq 0$. Hence, there is a relation among the $(b_i)_{t=0}$, which contradicts the assumption that $f$ is $(m,n)$-nondegenerate.
    \end{proof}

    \begin{lem} \label{lem2}
        If $d \mid n$, then
        \begin{equation*}
            a_P(m,d) \leq a_P(m,n).
        \end{equation*}
    \end{lem}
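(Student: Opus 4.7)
\medskip

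The plan is to reduce the inequality to an ideal containment in a single local ring on $\A^N$ (rather than on $\P^N\times\P^N$) and then exhibit that containment by iterating $F^d$ modulo the smaller ideal.

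First, by Proposition~\ref{prop_Tor_i}, $a_P(m,n)=\dim_K R_P/(I_{\Gamma_{n+m}}+I_{\Gamma_m})$. Work locally at $P$, dehomogenize, and move $P$ to the origin so that $f$ is represented by $F=(F_1,\dots,F_N)\in K[[X_1,\dots,X_N]]^N$, and identify $\widehat R_P$ with $K[[X_1,\dots,X_N,Y_1,\dots,Y_N]]$. Since $I_{\Gamma_m}=(Y_i-F_i^m(\textbf{X}))_{i=1}^N$, the projection away from the $Y$-variables gives
\begin{equation*}
\widehat R_P/(I_{\Gamma_{n+m}}+I_{\Gamma_m}) \;\cong\; K[[\textbf{X}]]/J_n,\qquad J_n:=\bigl(F_i^{m+n}(\textbf{X})-F_i^m(\textbf{X})\bigr)_{i=1}^N.
\end{equation*}
Thus $a_P(m,n)=\dim_K K[[\textbf{X}]]/J_n$, and the lemma reduces to showing $J_n\subseteq J_d$ when $d\mid n$, for then $K[[\textbf{X}]]/J_d$ is a quotient of $K[[\textbf{X}]]/J_n$.

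Next, write $n=kd$. The key step is the polynomial-difference identity: for any $F_j\in K[[\textbf{X}]]$ and any $N$-tuples $\textbf{a},\textbf{b}\in K[[\textbf{X}]]^N$, the difference $F_j(\textbf{a})-F_j(\textbf{b})$ lies in the ideal $(a_1-b_1,\dots,a_N-b_N)$ (telescope one coordinate at a time and factor out $a_i-b_i$). Apply this to $\textbf{a}=F^{m+d}(\textbf{X})$ and $\textbf{b}=F^m(\textbf{X})$: by definition of $J_d$, the $N$-tuple $\textbf{a}-\textbf{b}$ has every entry in $J_d$, so for each $j$,
\begin{equation*}
F_j^{m+2d}(\textbf{X})-F_j^{m+d}(\textbf{X})=F_j^d(F^{m+d}(\textbf{X}))-F_j^d(F^m(\textbf{X}))\in J_d.
\end{equation*}
Combined with $F_j^{m+d}(\textbf{X})-F_j^m(\textbf{X})\in J_d$, this gives $F_j^{m+2d}(\textbf{X})-F_j^m(\textbf{X})\in J_d$. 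Iterating $k$ times yields $F_j^{m+kd}(\textbf{X})-F_j^m(\textbf{X})\in J_d$ for every $j$, i.e.\ $J_n\subseteq J_d$.

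Combining the two steps,
\begin{equation*}
a_P(m,d)=\dim_K K[[\textbf{X}]]/J_d\;\le\;\dim_K K[[\textbf{X}]]/J_n=a_P(m,n),
\end{equation*}
as desired. The only nontrivial point is the polynomial-difference identity and its iteration; everything else is the reduction to $\A^N$ afforded by Proposition~\ref{prop_Tor_i}, which lets us avoid higher $\tor$ contributions.
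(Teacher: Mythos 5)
Your proof is correct and follows essentially the same route as the paper's: both arguments come down to the ideal containment $I_{\Gamma_{m+n}}+I_{\Gamma_m}\subseteq I_{\Gamma_{m+d}}+I_{\Gamma_m}$ obtained from the factorization of $f^{m+n}$ through $f^{m+d}$, which the paper summarizes in one line as ``algebraic combinations of elements of the ideal.'' Your version simply makes this precise by eliminating the $Y$-variables and telescoping with the polynomial-difference identity, which is a welcome expansion of the paper's terse argument.
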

    \begin{proof}
        We need to see that $\Gamma_m \cap \Gamma_{m+n} \subseteq \Gamma_m \cap \Gamma_{m+d}$. We obtain $f^{m+n}$ from $f^{m+d}$ by taking
        \begin{equation*}
            f^{m+n} = f^{n-d}\circ f^{m+d}.
        \end{equation*}
        Thus, $\Gamma_{m+n}$ is obtained from $\Gamma_{m+d}$ by taking algebraic combinations of elements of the ideal.
    \end{proof}

    \begin{proof}[Proof of Theorem \ref{thm_preperiodic_dynatomic}]
        \mbox{}
        \begin{enumerate}
            \item Points that are both on the graph $\Gamma_{n+m}$ and $\Gamma_m$ must satisfy $f^{n+m}(P) = f^m(P)$.

            \item We must show that multiplicities of points in $\Phi_{m,n}(f)$ are larger than multiplicities of points in $\Phi_{m-1,n}(f)$. In particular, we need to show that
                \begin{equation*}
                    (I_{\Gamma_{n+m}} + I_{\Gamma_m}) \subseteq (I_{\Gamma_{n+{m-1}}} + I_{\Gamma_{m-1}}).
                \end{equation*}
                To go from the right-hand side to the left-hand side, we replace $X_i$ with $F_i(\textbf{X})$ for $1 \leq i \leq N$.

            \item  We fix $(m,n)$ and consider each point $P \in \P^N(K)$.  If $P$ is not periodic of period $(m,n)$, then we have $a_P(k,d) = 0$ for all $d \mid n$ and $0 \leq k \leq m$ and, hence, $a_P^{\ast}(m,n) = 0$.  So we may assume that $P$ is periodic of period $(m,n)$.

                We consider the family of algebraic zero-cycles $Z_{(m,n),P}(t)$ defined above.  By Proposition \ref{prop_flat} this is a flat family and, thus, by \cite{Lazarsfeld} we have that
                \begin{equation*}
                    \lim_{t \to 0} Z_{(m,n),P}(t) = Z_{(m,n),P}(0) = Z_{(m,n),P}.
                \end{equation*}
                In particular, if $\{P_j(t)\}$ are the points in the support of $Z_{(m,n),P}(t)$ which go to $P$ as $t \to 0$, then if we write the algebraic zero-cycle as
                \begin{equation*}
                    Z_{(m,n),P}(t) = \sum_{P_j(t)} a_{P_j(t)}(m,n) (P_j(t)),
                \end{equation*}
                we have that
                \begin{equation*}
                    a_P(m,n) = \sum_j a_{P_j(t)}(m,n) \quad \text{and} \quad a_P^{\ast}(m,n) = \sum_j a_{P_j(t)}^{\ast}(m,n).
                \end{equation*}
                Note that each $P_j(t)$ is periodic with minimal period $(k_j,d_j)$ with $d_j \mid n$ and $k_j \leq m$. There are finitely many such $P_j(t)$; in fact by flatness, there are $a_P(m,n)$ of them counted with multiplicity.
                From standard results in the theory of analytic varieties in several complex variables concerning the Weierstrass Preparation theorem and multiple roots of Weierstrass polynomials \cite[\S 1.4]{Dangelo}, we know that the set of $t$ values for which there is a solution $P_j(t)$ with multiplicity greater than one is a thin set.
                In particular, generically there are $a_P(m,n)$ distinct $P_j(t)$ which satisfy $P_j(0)=P$.  Finally, $a_P(k,d) \leq a_P(m,n)$ for $d \mid n$ and $0 \leq k \leq m$ by (\ref{thm_effectitivity_b}) and Lemma \ref{lem2}.
                Thus, by avoiding a thin set of $t$, for each $d \mid n$ and $0 \le k \leq m$ each $P_j(t)$ occurs with multiplicity $1$ in $Z_{(k,d),P}(t)$ if its $k\tth$ iterate has minimal period dividing $d$ and preperiod at most $k$ and multiplicity $0$ otherwise.  We compute for $P_j(t)$ with minimal period $(k_j,d_j)$:
                \begin{align*}
                    \Phi_{m,n}^{\ast}(f) &= \sum_{d \mid n} \mu(n/d)\big((\Gamma_d \cap \Gamma_m) - (\Gamma_d \cap \Gamma_{m-1})\big)
                \end{align*}
                \begin{align*}
                    a_{P_j(t)}^{\ast}(m,n) &= \sum_{d \mid n} \mu\left(\frac{n}{d}\right) \left(a_{P_j(t)}(m,d) - a_{P_j(t)}(m-1,d)\right) \\
                    &=\sum_{d \mid \frac{n}{d_j}} \mu\left(\frac{n}{dd_j}\right).
                \end{align*}
                The term $\left(a_{P_j(t)}(m,d) - a_{P_j(t)}(m-1,d)\right)$ is $1$ if $k_j=m$ and $d_j \mid d$ and $0$ otherwise. Then we are taking the M\"obius sum of $0,1$ where it is $1$ if $d_j \mid d$ and $0$ otherwise. Thus,
                \begin{equation*}
                    a_{P_j(t)}^{\ast}(m,n) = 1
                \end{equation*}
                if $P_j(t)$ has minimal preperiod $m$ and $f^m(P_j(t))$ has minimal period $n$. Otherwise, $a_{P_j(t)}^{\ast}(m,n) =0$.

                Since
                \begin{equation*}
                    a_P^{\ast}(m,n) = \sum_j a_{P_j(t)}^{\ast}(m,n),
                \end{equation*}
                then $a_P^{\ast}(m,n) \geq 0$.

            \item As in (\ref{thm_effectitivity_c}), we perturb the system and we know from the proof of (\ref{thm_effectitivity_c}) that $a_{P_j(t)}(m,n) \geq 1$ if and only if $f^m(P_j(t))$ has minimal period $n$. In particular,
                \begin{equation*}
                    a_{f^m(P_j(t))}^{\ast}(n)=1.
                \end{equation*}
                We also have
                \begin{equation*}
                    a_{f^m(P)}^{\ast}(n) = \sum_j a_{f^m(P_j(t))}^{\ast}(n) \geq 1.
                \end{equation*}
                Thus, $f^m(P)$ has formal period $n$.
        \end{enumerate}
    \end{proof}
    For dynatomic cycles it is true that if $P$ is multiplicity $1$ in $\Phi^{\ast}_n(f)$, then $f$ is a point with minimal period $n$ \cite{Hutz1}. This turns out not to be true for generalized dynatomic cycles.
    \begin{exmp}
        Let $f(z) = z^2-1$. Then we compute
        \begin{equation*}
            \Phi^{\ast}_{1,2}(f) = z(z-1).
        \end{equation*}
        However, $0$ is a preperiodic point with minimal period $(0,2)$.
    \end{exmp}

    We now compute the number of preperiodic points of period $(m,n)$ and the number of formal preperiodic points of period $(m,n)$.

    We need to compute the intersection number for $\Gamma_m, \Gamma_{n+m} \subset \mathbb{P}^{N} \times \mathbb{P}^{N}$. Let $D_1$ and $D_2$ be the pullbacks in $\mathbb{P}^{N} \times \mathbb{P}^{N}$ of a hyperplane class $D$ in $\mathbb{P}^{N}$ by the first and second projections, respectively.
    \begin{lem}\cite[Proposition 4.16]{Hutz1}
        For $k \in \N$, the class of $\Gamma_k$ is given by
        \begin{equation*}
            \sum_{j=0}^{N} (d^k)^{N-j}D_1^{N-j}D_2^{j}.
        \end{equation*}
    \end{lem}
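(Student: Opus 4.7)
The plan is to work in the cohomology ring (equivalently the Chow ring) of $\P^N \times \P^N$, which is the bigraded polynomial ring
\begin{equation*}
    A^{\ast}(\P^N \times \P^N) = \Z[D_1,D_2]/(D_1^{N+1}, D_2^{N+1}).
\end{equation*}
Since $\Gamma_k$ is the image of the closed embedding $\P^N \hookrightarrow \P^N \times \P^N$ given by $P \mapsto (P, f^k(P))$, it is an irreducible subvariety of dimension $N$, hence of codimension $N$. Its class therefore lies in $A^N(\P^N \times \P^N)$, which has basis $\{D_1^{N-j}D_2^j\}_{0\leq j\leq N}$. So I may write
\begin{equation*}
    [\Gamma_k] = \sum_{j=0}^N a_j\, D_1^{N-j}D_2^j
\end{equation*}
for some integers $a_j$, and the task reduces to identifying $a_j = (d^k)^{N-j}$.

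To isolate $a_j$, I would pair against the complementary class $D_1^j D_2^{N-j}$. Because $D_1^{a}D_2^{b}$ with $a+b = 2N$ is the class of a point when $a=b=N$ and zero otherwise, the orthogonality relations in the Chow ring give
\begin{equation*}
    [\Gamma_k] \cdot D_1^j D_2^{N-j} = a_j \cdot [\text{pt}].
\end{equation*}
Geometrically, $D_1^j D_2^{N-j}$ is represented by $L \times M$, where $L \subset \P^N$ is a generic linear subspace of codimension $j$ (intersection of $j$ generic hyperplanes) and $M \subset \P^N$ is a generic linear subspace of codimension $N-j$. Thus $a_j$ equals the (properly counted) number of points in
\begin{equation*}
    \Gamma_k \cap (L \times M) \;=\; \bigl\{(P, f^k(P)) : P \in L,\; f^k(P) \in M\bigr\},
\end{equation*}
which is the same as counting points of $L \cap (f^k)^{-1}(M)$ in $\P^N$.

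Now $L$ has dimension $N-j$ and degree $1$, while $(f^k)^{-1}(M)$ is cut out by $N-j$ hypersurfaces of degree $d^k$ (the pullbacks under $f^k$ of the $N-j$ hyperplanes defining $M$), so it has codimension $N-j$ and degree $(d^k)^{N-j}$ as a subscheme of $\P^N$. Choosing $L$ and $M$ generically, $L$ meets $(f^k)^{-1}(M)$ transversely in the expected $1 \cdot (d^k)^{N-j} = (d^k)^{N-j}$ reduced points by Bertini plus Bezout. This yields $a_j = (d^k)^{N-j}$ and completes the proof.

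The main subtlety is justifying that for generic $L$ and $M$ the intersection $L \cap (f^k)^{-1}(M)$ is proper and transverse, so that the set-theoretic count coincides with the intersection number in the Chow ring. This is a standard generic-smoothness argument: since $f^k$ is a finite morphism of $\P^N$, the preimage $(f^k)^{-1}(M)$ is a complete intersection of the expected dimension for generic $M$, and then a further generic translate $L$ of a codimension-$j$ linear space meets it transversely by Kleiman's transversality (or directly by Bertini applied to the linear system of hyperplanes cutting out $L$). Once this transversality is in hand, the formula follows immediately from the degree computation above.
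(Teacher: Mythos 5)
The paper does not prove this lemma but simply cites it from \cite{Hutz1}, and your argument is the standard (and correct) one: expand $[\Gamma_k]$ in the basis $D_1^{N-j}D_2^j$ of $A^N(\P^N\times\P^N)$ and extract each coefficient by intersecting with the complementary class $D_1^jD_2^{N-j}$, which via the projection formula reduces to $H^j\cdot((f^k)^*H)^{N-j}=(d^k)^{N-j}[\text{pt}]$. This matches the approach of the cited source, so nothing further is needed.
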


    \begin{thm}
        A morphism $f: \mathbb{P}^N \to  \mathbb{P}^N$ of degree $d$ has
        \begin{equation*}
            \deg(\Phi_{m,n}(f)) = \sum_{j=0}^{N} d^{nj+mN}.
        \end{equation*}
        This is the number of preperiodic points of period $(m,n)$ counted with multiplicity.
    \end{thm}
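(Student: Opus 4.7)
The plan is to compute the intersection number $[\Gamma_{n+m}] \cdot [\Gamma_m]$ directly in the Chow ring (or cohomology ring) of $\P^N \times \P^N$, using the class formula just quoted from \cite{Hutz1}. Since $f$ is assumed $(m,n)$-nondegenerate, the intersection $\Gamma_{n+m} \cap \Gamma_m$ is proper and zero-dimensional, so $\deg(\Phi_{m,n}(f))$ is precisely the intersection number counted with multiplicity, which equals the cup product $[\Gamma_{n+m}] \cdot [\Gamma_m]$ evaluated against the fundamental class of $\P^N \times \P^N$.

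First I would recall that the Chow ring of $\P^N \times \P^N$ is $\Z[D_1,D_2]/(D_1^{N+1},D_2^{N+1})$, so that only the top monomial $D_1^N D_2^N$ contributes to the degree, and it contributes $1$. By the lemma,
\begin{equation*}
    [\Gamma_{n+m}] = \sum_{j=0}^{N} d^{(n+m)(N-j)} D_1^{N-j} D_2^{j}, \qquad [\Gamma_m] = \sum_{i=0}^{N} d^{m(N-i)} D_1^{N-i} D_2^{i}.
\end{equation*}
Multiplying these and isolating the $D_1^N D_2^N$ term requires $(N-j)+(N-i)=N$ and $j+i=N$, both of which reduce to $i=N-j$. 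So exactly one pair $(i,j)$ contributes for each $j \in \{0,1,\ldots,N\}$.

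Next I would substitute $i=N-j$ and collect exponents: the coefficient of $D_1^N D_2^N$ equals
\begin{equation*}
    \sum_{j=0}^{N} d^{(n+m)(N-j) + m j} = \sum_{j=0}^{N} d^{(n+m)N - nj}.
\end{equation*}
Reindexing by $k = N-j$ gives $\sum_{k=0}^{N} d^{mN + nk}$, which is exactly the claimed $\sum_{j=0}^{N} d^{nj + mN}$.

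The substantive content is already packaged into the class formula from \cite{Hutz1}, so there is no real obstacle; the only thing to be careful about is the bookkeeping on the exponents and making sure the transverse/proper intersection hypothesis is invoked correctly to identify the cycle-theoretic degree with the cohomological intersection number. That justification comes from $(m,n)$-nondegeneracy together with the Serre intersection formula verified in Proposition \ref{prop_Tor_i}, which ensures all higher Tor terms vanish so that the local intersection multiplicities agree with those appearing in the global cohomological calculation.
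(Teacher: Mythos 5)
Your proposal is correct and follows essentially the same route as the paper: both compute $[\Gamma_{n+m}]\cdot[\Gamma_m]$ in the Chow ring of $\P^N\times\P^N$ using the quoted class formula, extract the $D_1^ND_2^N$ coefficient via the constraint $j+k=N$, and reindex to obtain $\sum_{j=0}^N d^{nj+mN}$. Your additional remark invoking $(m,n)$-nondegeneracy and the Tor-vanishing to identify the cycle-theoretic degree with the cohomological intersection number is a reasonable point the paper leaves implicit.
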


    \begin{proof}
        We compute the intersection number of
        $\Gamma_{n+m}$ and $\Gamma_m$.
        \begin{align*}
            (\Gamma_{n+m}) \cdot(\Gamma_m) &= \left(\sum_{j=0}^{N} (d^{n+m})^{N-j} D_1^{N-j}D_2^{j}\right) \cdot \left(\sum_{k=0}^{N} (d^m)^{N-k}D_1^{N-k}D_2^{k}\right) \\
            &= \sum_{j+k=N}(d^{n+m})^{N-j}(d^m)^{N-k}D_1^{N}D_2^N \\
            &= \sum_{j+k=N} d^{(n+m)(N-j) + m(N-k)}\\
            &= \sum_{j+k=N} d^{n(N-j) + mN}\\
            &= \sum_{j=0}^N d^{nj + mN}.
        \end{align*}
    \end{proof}

    \begin{cor}
        A morphism $f: \mathbb{P}^N \to  \mathbb{P}^N$ given by $N+1$ homogeneous forms of degree $d$ has
        \begin{align*}
            \deg(\Phi^{\ast}_{m,n}(f))
            &= \sum_{D \mid n} \mu(n/D) \left(\deg(\Phi_{m,D}) - \deg(\Phi_{m-1,D})\right)\\
            &= \sum_{D \mid n} \mu(n/D) \sum_{j=0}^N d^{Dj + N}.
        \end{align*}
    \end{cor}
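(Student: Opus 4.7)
The plan is to derive both equalities directly from the definition of $\Phi^{\ast}_{m,n}(f)$ given in Section \ref{sect_dyn_cycles} combined with the explicit degree formula established in the preceding theorem. The whole argument is essentially bookkeeping.

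First I would establish the first equality. By the definition recalled in Section \ref{sect_dyn_cycles},
$$\Phi^{\ast}_{m,n}(f) = \sum_{D\mid n}\mu(n/D)\bigl((\Gamma_{m+D}\cap\Gamma_m) - (\Gamma_{m+D-1}\cap\Gamma_{m-1})\bigr).$$
By the very definition of the generalized $(m,n)$-period cycle, one has $\Gamma_{m+D}\cap\Gamma_m = \Phi_{m,D}(f)$ and $\Gamma_{m+D-1}\cap\Gamma_{m-1} = \Phi_{m-1,D}(f)$. Since $\deg(\cdot)$ is linear on signed sums of cycles (recall that effectivity of $\Phi^{\ast}_{m,n}(f)$ was proved in Theorem \ref{thm_preperiodic_dynatomic}\ref{thm_effectitivity_c}, so the signed combination makes sense as a degree), applying $\deg$ termwise immediately yields the first equality.

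Next I would apply the preceding theorem to each summand. That theorem gives $\deg(\Phi_{k,D}(f)) = \sum_{j=0}^N d^{Dj+kN}$ for $k=m$ and $k=m-1$; substituting these explicit expressions into the first equality and collecting terms in the inner sum produces the desired closed form on the right. The only point requiring mild care is the boundary case $m=1$, where $\Gamma_{m-1} = \Gamma_0 = \Delta$ and $\Phi_{0,D}(f)$ degenerates to the classical dynatomic cycle $\Phi_D(f)$; however the theorem's formula still applies verbatim with $m-1=0$, so the reduction is uniform in $m$.

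No substantive obstacle is anticipated: the corollary is a direct algebraic consequence of the preceding theorem applied twice (once at level $m$, once at level $m-1$), followed by purely routine simplification under the Möbius sum. The argument does not depend on the intersection-theoretic machinery used to prove effectivity, only on the stated degree formula.
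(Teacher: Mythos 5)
Your overall strategy is exactly the one the paper intends (the corollary is stated without proof as an immediate consequence of the definition of $\Phi^{\ast}_{m,n}$ plus the degree formula of the preceding theorem), and your derivation of the first equality by linearity of $\deg$ applied to the signed sum defining $\Phi^{\ast}_{m,n}(f)$, with $\Gamma_{m+D}\cap\Gamma_m=\Phi_{m,D}(f)$ and $\Gamma_{m+D-1}\cap\Gamma_{m-1}=\Phi_{m-1,D}(f)$, is correct, as is your handling of the boundary case $m=1$.

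The gap is in the second step, where you assert that substituting the theorem's formula and ``collecting terms'' produces the displayed closed form. It does not. The theorem gives $\deg(\Phi_{k,D}(f))=\sum_{j=0}^N d^{Dj+kN}$, so the substitution yields
\begin{equation*}
\deg(\Phi^{\ast}_{m,n}(f))=\sum_{D\mid n}\mu(n/D)\sum_{j=0}^N\left(d^{Dj+mN}-d^{Dj+(m-1)N}\right)
=\sum_{D\mid n}\mu(n/D)\sum_{j=0}^N d^{Dj+(m-1)N}\left(d^{N}-1\right),
\end{equation*}
which coincides with the printed $\sum_{D\mid n}\mu(n/D)\sum_{j=0}^N d^{Dj+N}$ only when $d^{(m-1)N}(d^N-1)=d^N$, i.e.\ essentially never (note the printed right-hand side has no $m$-dependence at all, which should already be suspicious). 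The printed second equality appears to be a typo in the paper, and your proof as written claims to verify a formula that the computation does not actually produce. You should carry out the substitution explicitly and either state the corrected closed form above or explain precisely why the printed one holds; asserting ``routine simplification'' here papers over an actual discrepancy.
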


    For $f:\P^1 \to \P^1$, a topic for further study is the geometry of the dynatomic modular curves resulting from the generalized dynatomic polynomials, see \cite{Bousch,Morton4}.

\section{Uniform Boundedness}\label{sect_experimental_results}
  \subsection{The family $z^d+c$}
    Poonen studied the special case of the Morton-Silverman uniform boundedness conjecture for quadratic polynomial maps $f_c(z) = z^2+c$. In this section we apply our algorithm to a computational investigation of the families of maps $f_{d,c} = z^d+c$, which have been studied by Narkiewiscz. The following lemma shows that the denominator of $c$ must be a $d\tth$ power, generalizing an observation made in \cite{Hutz5}.
    \begin{lem}
        Suppose that $z^d+c$ has a periodic point $\alpha\in \A^N(K)$.  Then for each nonarchimedean place $v$ of $K$ with $v(c)<0$, we have $v(c)=dv(\alpha)$.  For each nonarchimedean place with $v(c)\geq 0$, we have $v(\alpha)\geq 0$.
    \end{lem}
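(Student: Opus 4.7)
The plan is to track the $v$-adic valuation along the forward orbit $\{f^k(\alpha)\}_{k\geq 0}$ of $\alpha$ under $f(z)=z^d+c$, and use the fact that periodicity forces $\{v(f^k(\alpha))\}$ to be bounded (since a periodic orbit is finite). In every case where the conclusion fails, I will produce an iterate whose valuation is strictly more negative than the previous one by a multiplicative factor $d$, so that $v(f^k(\alpha))\to -\infty$, giving a contradiction.

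First consider a place $v$ with $v(c)\geq 0$. Suppose for contradiction that $v(\alpha)<0$. Then $v(\alpha^d)=dv(\alpha)<v(\alpha)\leq 0\leq v(c)$, so by the ultrametric inequality $v(f(\alpha)) = v(\alpha^d + c) = dv(\alpha)$. An easy induction then gives $v(f^k(\alpha))=d^k v(\alpha)\to -\infty$, contradicting periodicity. So $v(\alpha)\geq 0$.

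Next consider $v(c)<0$. Set $a=v(\alpha)$ and $b=v(c)$. I will show $da=b$ by ruling out the two other cases.
\begin{itemize}
\item If $da>b$, then $v(\alpha^d+c)=b$, so $v(f(\alpha))=b<0$. Repeating the argument with $\alpha$ replaced by $f(\alpha)$ puts us in the situation $v(f(\alpha))=b$ and $v(f(\alpha)^d)=db<b=v(c)$, hence $v(f^2(\alpha))=db$. Inductively $v(f^{k+1}(\alpha))=d^k b\to -\infty$, contradicting periodicity.
\item If $da<b$, then $da<b<0$, so $v(\alpha^d+c)=da$, giving $v(f(\alpha))=da<a$. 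Now $v(f(\alpha))=da$ still satisfies $d\cdot v(f(\alpha))=d^2 a<da<b$, so the same alternative applies and $v(f^2(\alpha))=d^2 a$. Inductively $v(f^k(\alpha))=d^k a\to -\infty$, again contradicting periodicity.
\end{itemize}
The only remaining option is $da=b$, i.e., $v(c)=d\,v(\alpha)$.

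There is no real obstacle here; the proof is a routine ultrametric calculation. The only mild subtlety is noticing that once an iterate has valuation strictly less than $v(c)$ (and negative), the ultrametric inequality becomes an equality at every subsequent step, so the valuations really do march off to $-\infty$ rather than accidentally cancelling; the case split above is chosen precisely so this is immediate.
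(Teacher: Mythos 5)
Your proof is correct and follows essentially the same route as the paper: the identical case split on $dv(\alpha)$ versus $v(c)$, with the ultrametric inequality forcing $v(f^k(\alpha))\to-\infty$ and contradicting periodicity in each bad case. No issues.
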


    \begin{proof}
         If $0>v(c)>dv(\alpha)$, then, by the ultrametric inequality,
        $v(\alpha^d+c)=dv(\alpha)$ and, in particular, $dv(\alpha^d+c)<v(\alpha^d+c)<v(c)$.  By induction, $v(f_{d,c}^n(\alpha))= d^nv(\alpha)$ which, since $v(\alpha)\neq 0$, contradicts the periodicity of $\alpha$.  If, on the other hand, $0>v(c)$ and $dv(\alpha)>v(c)$, we have $v(\alpha^d+c)=v(c)$.  But in this case, $dv(\phi(\alpha))=dv(c)<v(c)$, and so the previous argument shows that $f_{d,c}(\alpha)$ (and hence $\alpha$) is not periodic.

        For the second claim, simply note that if $v(c)\geq 0$ but $v(\alpha)<0$, we immediately conclude $v(\alpha^d+c)=dv(\alpha)<0$.  By induction we obtain $v(f_{d,c}^n(\alpha))=d^nv(\alpha)$, from which is it clear that $\alpha$ cannot be preperiodic under $f_{d,c}$.
    \end{proof}

    The previous lemma greatly reduces the search space when we apply the algorithm to all maps $f_{d,c}(z)$ defined over $\Q$ with $H(c) < B$ for some height bound $B$. The results are summarized in the following table.

\renewcommand{\arraystretch}{1.4}
    \begin{equation*}
        \begin{tabular}{|l|l|l|l|l|l|}
          \hline
          map & height bound & max period & max \# periodic & max \# preperiodic\\
          \hline
          $z^2+c$ & 1,000,000 & 3, $c= -\frac{29}{16}$ & 5, $c=-\frac{21}{16}$ & 9, $c=-\frac{29}{16}$\\
          \hline
          $z^3+c$ & 1,000,000 & 1, $c=0$ & 4, $c=0$& 4, $c=0$ \\
          \hline
          $z^4+c$ & 5,000,000 & 2, $c=-1$ & 3, $c=-1$& 4, $c=-1$\\
          \hline
          $z^5+c$ & 5,000,000 & 1, $c=0$ & 4, $c=0$& 4, $c=0$ \\
          \hline
          $z^6+c$ & 10,000,000 & 2, $c=-1$ & 3, $c=-1$& 4, $c=-1$\\
          \hline
          $z^7+c$ & 10,000,000 & 1, $c=0$ & 4, $c=0$& 4, $c=0$ \\
          \hline
          $z^8+c$ & 10,000,000 & 2, $c=-1$ & 3, $c=-1$& 4, $c=-1$\\
          \hline
          $z^9+c$ & 10,000,000 & 1, $c=0$ & 4, $c=0$& 4, $c=0$ \\
          \hline
          $z^{10}+c$ & 10,000,000 & 2, $c=-1$ & 3, $c=-1$& 4, $c=-1$\\
          \hline
          $z^{11}+c$ & 10,000,000 & 1, $c=0$ & 4, $c=0$& 4, $c=0$ \\
          \hline
        \end{tabular}
    \end{equation*}

In addition to the $12$ structures from Poonen \cite{Poonen} for $z^2+c$, the following $2$ preperiodic structures are possible. Note that the fixed point at infinity is included in the diagrams.
\begin{equation*}
\begin{tabular}{|c|c|}
\hline
$f_d(z) = z^{2d+1}-(2^{2d+1}-2)$
&
$f_d(z)=z^{2d+1}$
\\

$\xygraph{
!{<0cm,0cm>;<1cm,0cm>:<0cm,1cm>::}
!{(0,0) }*+{\bullet}="a"
!{(1,0) }*+{\bullet}="b"
"a":@(rd,ru)"a"
"b":@(rd,ru)"b"
}$

&
$\xygraph{
!{<0cm,0cm>;<1cm,0cm>:<0cm,1cm>::}
!{(0,0) }*+{\bullet}="a"
!{(1,0) }*+{\bullet}="b"
!{(2,0) }*+{\bullet}="c"
!{(3,0) }*+{\bullet}="d"
"a":@(rd,ru)"a"
"b":@(rd,ru)"b"
"c":@(rd,ru)"c"
"d":@(rd,ru)"d"
}$
\\
\hline
\end{tabular}
\end{equation*}

    From this data we make the following conjectures.
    \begin{conj}[Generalized Poonen]
        For $n > 3$ there is no $f_{d,c}(z) = z^d+c$ defined over $\Q$ with a $\Q$-rational periodic point of minimal period $n$. For maps of the form $f_{d,c}$ we have
        \begin{equation*}
            \#\Pre(f_{d,c},\Q) \leq 9.
        \end{equation*}
    \end{conj}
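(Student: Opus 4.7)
The plan is to attack the two claims separately, following and extending the strategies that have been used in the $d=2$ case. For the bound on periods, the natural object is the generalized dynatomic modular curve $Y_n^d = \{(z,c) \col \Phi_n^{\ast}(z,c) = 0\}$ associated to the one-parameter family $z^d+c$. First I would establish geometric irreducibility of $Y_n^d$, likely by a monodromy argument applied to the branched cover $Y_n^d \to \A^1_c$, and then compute its genus via Riemann--Hurwitz. For $n \geq 4$ the genus should grow quickly in both $n$ and $d$, so Faltings's theorem yields finiteness of $\Q$-rational points on $Y_n^d$. The real work is explicitly identifying those rational points: for very small $n$ by direct factoring of $\Phi_n^{\ast}$, for moderate $n$ by Chabauty--Coleman applied to the Jacobian (as Stoll does for $n=6$, $d=2$), and in larger cases by elliptic-curve Chabauty combined with a Mordell--Weil sieve in the spirit of Faber--Patsolic--Stoll. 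Each rational point then has to be identified as either a genuine periodic pair $(z,c)$ or a cuspidal degeneration where dynatomic components coincide in the limit.

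For the preperiodic bound $\#\Pre(f_{d,c},\Q) \leq 9$, assuming the first claim, the plan is to classify the possible directed-graph shapes of $\Pre(f_{d,c},\Q)$ by enumerating the admissible short cycles (fixed points and $2$-cycles, plus the fixed point at infinity) together with their finite trees of rational preimages. A key input is the preceding lemma: any place $v$ with $v(c) < 0$ forces $v(\alpha) = v(c)/d$ on every preperiodic $\alpha$, which gives strong $v$-adic constraints that drastically limit which configurations of preimages can simultaneously lie in $\Q$. Combined with the fact that every point has at most $d$ rational preimages and only one of them can be the point at infinity, one would then bound the depth and branching of the preperiodic tree in the style of Poonen's graph-by-graph case analysis, with the two additional structures exhibited in the table above accounting for the new shapes arising for odd $d$.

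The main obstacle is the first claim. The conjecture for $d=2$ with $n \geq 7$ is itself open, and the uniform-in-$d$ version is a priori harder. Even for fixed $d$, the Jacobians of $Y_n^d$ grow in dimension with $n$, their Mordell--Weil ranks are hard to control, and Chabauty-type methods require rank strictly less than genus. A genuinely uniform argument would likely have to exploit special structure of the family---for instance the action of the $(d{-}1)$-st roots of unity $\zeta$ via $z \mapsto \zeta z$, which conjugates $z^d+c$ to itself and descends $Y_n^d$ to a quotient of smaller genus---or introduce techniques that bypass the Jacobian entirely. Short of such a breakthrough, the realistic plan is what the paper already does: verify the conjecture computationally for wider ranges of $(d,c)$ using the algorithm of Section \ref{sect_algorithm} and accumulate evidence, attempting a rigorous proof only once a single $(d,n)$ at a time using the generalized dynatomic polynomials developed in Section \ref{sect_dyn_cycles}.
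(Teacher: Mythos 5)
This statement is a conjecture, not a theorem: the paper offers no proof of it, only the computational evidence in the preceding table (maps $z^d+c$ for $2\le d\le 11$ with $H(c)$ up to $10^7$), together with the observation that Narkiewicz's theorem settles the odd-$d$ case (indeed with the stronger bound $\#\Pre(f_{d,c},\Q)\le 4$), leaving the even-$d$ case open as Conjecture 1a. So there is nothing in the paper for your proposal to match, and to your credit you recognize this explicitly in your final paragraph.

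As a research program your sketch is reasonable but contains the expected unfilled gaps, which are worth naming precisely. First, the period bound is open already for $d=2$: only $n=4,5$ are unconditionally ruled out, $n=6$ is conditional on standard conjectures (Stoll), and $n\ge 7$ is untouched; moreover the curve-by-curve Chabauty strategy is inherently non-uniform in $d$, since the genus, the Jacobian dimension, and the Mordell--Weil rank all vary with $(d,n)$, so it cannot by itself deliver the ``for all $d$'' quantifier in the conjecture. Second, your derivation of the bound $9$ presupposes the period bound and then requires a complete graph classification in the style of Poonen; the $v$-adic lemma in the paper constrains denominators but does not by itself bound the depth or branching of the preimage trees uniformly in $d$, and for even $d>2$ the paper's own data suggest the truth is the much smaller bound $4$ (Conjecture 1a), so the graph analysis you propose has not been carried out even conjecturally beyond $d=2$. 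The one piece of your plan that is actually a theorem is the odd-$d$ case, where you should simply invoke Narkiewicz's monotonicity argument (an increasing real polynomial map has only fixed rational periodic points, bounded by the rational roots of $f_{d,c}(z)-z$) rather than dynatomic curves.
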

    Note that this conjecture is no longer a special case of the Morton-Silverman conjecture as it allows the degree of map to increase. We provide a more general conjecture along these lines in Section \ref{sect_gen_conj}.

    For odd $d$, results of Narkiewicz resolve Conjecture 1 as a special case. In particular
    \begin{thm}[Narkiewicz \cite{Narkiewicz2}]
        For $n > 1$ and $d$ odd, there is no $c \in \Q$ such that $f_{d,c}$ has a $\Q$-rational periodic point of minimal period $n$. Furthermore,
        \begin{equation*}
            \#\Pre(f_{d,c},\Q) \leq 4.
        \end{equation*}
    \end{thm}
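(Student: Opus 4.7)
The plan is to prove the two claims separately: first, no $\Q$-rational periodic point of minimal period $n \geq 2$; second, $\#\Pre(f_{d,c},\Q) \leq 4$.

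For the periodic-point claim, I would let $\alpha_0, \alpha_1, \ldots, \alpha_{n-1}$ be a hypothetical rational $n$-cycle and use the identity $x^d - y^d = (x-y)M(x,y)$ with $M(x,y) = \sum_{k=0}^{d-1} x^{d-1-k} y^k$, so that $\alpha_{i+1} - \alpha_{j+1} = (\alpha_i - \alpha_j)\,M(\alpha_i, \alpha_j)$. Iterating around the cycle yields $\prod_{k=0}^{n-1} M(\alpha_{i+k}, \alpha_{j+k}) = 1$ for every pair $i \neq j$. By the preceding lemma I write $c = c_0/b^d$ in lowest terms so that each $\alpha_i$ is a $p$-adic integer for every prime $p \nmid b$; the cycle identity then forces each $M(\alpha_i, \alpha_j)$ to be a $p$-adic unit. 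Taking $p = 2$ when $b$ is odd, I would use $x^d \equiv x \pmod 2$ (valid for any odd $d$) together with the fact that $d - 1$ is even to compute $M(\alpha_i, \alpha_j) \equiv \alpha_i + \alpha_j \pmod 2$. The unit condition then forces the $\alpha_i$ to be pairwise distinct modulo $2$, whence $n \leq 2$. When $b$ is even I would run the same strategy at $p = 3$ (still $x^d \equiv x \pmod 3$ for $d$ odd), with a residue analysis bounding $n \leq 3$; the residual $n = 3$ case is eliminated by noting that the induced $3$-cycle modulo $3$ is super-attracting (its multiplier vanishes because $0$ lies in the cycle), so Hensel's lemma forces a unique lift that is checked directly to be a fixed point. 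Period $n = 2$ is finally ruled out analytically: subtracting the two cycle equations gives $g(\alpha_0) = g(\alpha_1)$ with $g(z) = z^d + z$, but $g'(z) = dz^{d-1} + 1 \geq 1$ because $d - 1$ is even, so $g$ is strictly increasing on $\R$ and the two points coincide.

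For the preperiodic-count claim, I would observe that for $d$ odd the map $x \mapsto x^d$ is injective on $\R$: if $\alpha$ is a rational fixed point and $\beta \neq \alpha$ is a rational preimage, then $\beta^d + c = \alpha = \alpha^d + c$ forces $\beta^d = \alpha^d$ and hence $\beta = \alpha$, a contradiction. Thus $\Pre(f_{d,c}, \Q) = \Fix(f_{d,c}, \Q) \cup \{\infty\}$, reducing the bound to counting rational roots of $z^d - z + c = 0$. For two distinct rational fixed points $r_1 \neq r_2$, subtracting their defining equations gives $\sum_{k=0}^{d-1} r_1^{d-1-k} r_2^k = 1$. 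A case analysis combining the real-analytic shape of $z^d - z$ with the denominator constraint from the lemma forces $\{r_1, r_2\} \subseteq \{-1, 0, 1\}$, hence $c = 0$; for $c = 0$ the map $z^d$ has the three rational fixed points $0, 1, -1$ (since $z^{d-1} = 1$ over $\Q$ with $d-1$ even admits only $z = \pm 1$), so with $\infty$ the total is at most four.

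The main obstacle is the case $b$ even in the periodic-point argument: mod-$2$ reduction is unavailable, mod-$3$ reduction alone yields only $n \leq 3$, and ruling out the residual $n = 3$ requires either the Hensel-based uniqueness sketched above (invoking Proposition \ref{thm_goodreduction} to bound the lift) or a supplementary good-reduction argument at another prime. A secondary subtlety is verifying rigorously that $\sum r_1^{d-1-k} r_2^k = 1$ has only the trivial rational solutions $\{r_1, r_2\} \subseteq \{-1, 0, 1\}$ uniformly in odd $d$; I expect this reduces, via the denominator constraint, to the integer case, which is then dispatched by bounding $|r_i|$ using the monotonicity of $z^d - z$ outside a small interval around the origin.
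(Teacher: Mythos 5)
The paper's proof is a two-line real-analysis argument: since $d-1$ is even, $f_{d,c}'(z)=dz^{d-1}\ge 0$, so $f_{d,c}$ is nondecreasing on $\R$; a nondecreasing self-map of $\R$ has only fixed points among its periodic points (if $f(x)>x$ then $f^n(x)>f^{n-1}(x)>\cdots>x$ by induction, and symmetrically if $f(x)<x$), and strict monotonicity makes $f_{d,c}$ injective, so no strictly preperiodic points exist either. The count then comes from $z^d-z+c$: its derivative $dz^{d-1}-1$ has exactly two real roots, so by Rolle there are at most $3$ real (hence at most $3$ rational) fixed points, and with $\infty$ the total is at most $4$. You actually have this key observation in hand --- you invoke monotonicity of $z^d+z$ to kill $n=2$ and injectivity of $z\mapsto z^d$ in the preimage step --- but you do not notice that it disposes of \emph{all} periods $n>1$ at once, which makes the entire $p$-adic apparatus unnecessary.

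As written, your argument also has two genuine errors. First, the mod~$2$ step: with $M(x,y)=\sum_{k=0}^{d-1}x^{d-1-k}y^k$ one gets $M(\alpha_i,\alpha_j)\equiv d\equiv 1\pmod 2$ when both entries are odd, so the congruence $M\equiv\alpha_i+\alpha_j$ is false, and the correct unit condition only says that at most one cycle element is even --- it does not force the $\alpha_i$ to be pairwise distinct mod $2$, so $n\le 2$ does not follow; you also concede the $b$ even case is unresolved. Second, and more seriously, the claim that two distinct rational fixed points force $\{r_1,r_2\}\subseteq\{-1,0,1\}$ and hence $c=0$ is false: for $d=3$ and $c=120/343$ the polynomial $z^3-z+c$ factors as $(z-\tfrac{3}{7})(z-\tfrac{5}{7})(z+\tfrac{8}{7})$, so $f_{3,c}$ has three rational fixed points none of which lie in $\{-1,0,1\}$ (note $343=7^3$ is consistent with the denominator lemma). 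The bound $\#\Pre(f_{d,c},\Q)\le 4$ survives only because of the Rolle count above, not because $c$ must vanish.
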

    The proof is actually quite simple. Since $f_{d,c}$ is nondecreasing there can only be fixed points (with no preperiod). The bound comes from counting the number of rational roots of $f_{d,c}(z) -z$.

    The even degree case remain open.
    \begin{conj1a}[Even degree]
        For $n > 2$ there is no even $d >2$ and $c \in \Q$ such that $f_{d,c}$ a $\Q$-rational periodic point of minimal period $n$. Furthermore,
        \begin{equation*}
            \#\Pre(f_{d,c},\Q) \leq 4.
        \end{equation*}
    \end{conj1a}

    \subsection{Families of Conservative Maps}
        A point $P$ is a \emph{critical point} for $f:\P^1 \to \P^1$ if $f'(P) =0$. A map is \emph{conservative} if all its critical points are fixed points. Note that conservative maps are a special case of post-critically finite maps, maps whose critical points are all preperiodic. The algorithm was applied to two families of conservative maps.

        The conservative maps
        \begin{equation*}
            f_d(z) = \frac{(d-2)z^d +dz}{dz^{d-1} + (d-2)}
        \end{equation*}
        for $2 \leq d \leq 100$ were examined. In all cases there were $4$ rational preperiodic points $\{0,1,-1,\infty\}$. For $d$ odd, they are all fixed.  For $d$ even, $\{0,1,\infty\}$ are fixed and $-1$ is strictly preperiodic, $[-1 \to 1]$.

        The conservative maps
        \begin{equation*}
            f_d(z) = \frac{d}{d-1}z + z^d
        \end{equation*}
        for $2 \leq d \leq 200$ were examined. For $d=2$ there are $3$ $\Q$-rational fixed points and $1$ strictly preperiodic point for a total of $4$ rational preperiodic points.  For $3 \leq d \leq 200$, there are 2 fixed points.


        From this data it seems reasonable to conjecture that the number of rational preperiodic points for both of these families of conservative maps is uniformly bounded independent of $d$.

    \subsection{A more general conjecture}\label{sect_gen_conj}
        In addition to families of maps already discussed, the author examined a few other families, such as families of maps of the form $z^d + cz^e$, $d > e \geq 1$ for fixed $e$. In all cases, there seemed to be a similar phenomenon of uniform boundedness independent of $d$. On this somewhat limited evidence, consider the following conjecture.
        \begin{conj}
            Let $g(z)$ be any rational map and let
            \begin{equation*}
                f_{d,g}(z) = z^d + g(z).
            \end{equation*}
            Then there exists a constant $C(g,D)$ such that for all number fields $[K:\Q] =D$
            \begin{equation*}
                \#\Pre(f_{d,g},K) < C.
            \end{equation*}
        \end{conj}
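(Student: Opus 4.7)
The plan is to combine an effective uniform-in-$d$ height bound for preperiodic points with the degree-uniform Northcott finiteness theorem. The target is a constant $C_1 = C_1(g, D)$, independent of $d$ and of the particular field $K$ with $[K:\Q] = D$, such that every $\alpha \in \Pre(f_{d,g}, K)$ satisfies $h(\alpha) \leq C_1$; Northcott then converts this into the desired count $C(g, D)$. Since absolute heights do not depend on the embedding $K \hookrightarrow \overline{\Q}$, a single height bound suffices to treat all $K$ with $[K:\Q] = D$ uniformly.

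First I would apply Proposition~\ref{prop_height_bound} to the homogenization $F_{d,g}:\P^1 \to \P^1$ of $f_{d,g}$. Writing $g = p/q$ with $p, q \in \O_K[z]$ coprime, the coefficients of $F_{d,g}$ are built from $1$ and the coefficients of $p, q$, so $h(F_{d,g})$ is bounded by a quantity depending only on $g$. The combinatorial factors $\binom{N+d}{d}$ and $\binom{N+D-d}{D-d}$ with $N=1$ contribute only $O(\log d)$. Provided the Nullstellensatz coefficient heights $\max_{i,j} h(g_{i,j})$ are controlled by a polynomial in $\log d$ with $g$-dependent constant, the bound in Proposition~\ref{prop_height_bound} takes the shape $(A(g) + O(\log d))/(d-1)$, and taking the maximum over $d \geq 2$ yields the desired $C_1(g, D)$.

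Second, with $C_1$ in hand, I would invoke the degree-uniform Northcott theorem: for fixed $C_1$ and $D$, only finitely many $\overline{\Q}$-points $P \in \P^1$ satisfy $h(P) \leq C_1$ and $[\Q(P):\Q] \leq D$, with a count $C_2(C_1, D)$ depending only on $C_1$ and $D$. Since $\Pre(f_{d,g}, K)$ lies in this finite set whenever $[K:\Q] = D$, the conjecture follows with $C(g, D) = C_2(C_1(g, D), D)$.

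The main obstacle is producing Nullstellensatz witnesses $g_{i,j}$ with heights bounded uniformly in $d$. As a toy case, for $g = c$ constant one verifies directly that $x^{2d-1} = x^{d-1}F_0 - c\,x^{d-1}F_1$ and $y^{2d-1} = y^{d-1}F_1$, so $\max_{i,j} h(g_{i,j}) = h(c)$ independently of $d$ and the plan goes through cleanly. For general $g = p/q$ one would either construct explicit witnesses by polynomial division adapted to the shape of $F_{d,g}$, or invoke effective arithmetic Nullstellensatz results of Krick--Pardo--Sombra type giving polynomial-in-$d$ bounds on the coefficient heights; after division by $d - 1$ in Proposition~\ref{prop_height_bound} this still produces a uniform bound, though with weak constants. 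A secondary difficulty is that matching the small numerical bounds suggested by the experimental tables of Section~\ref{sect_experimental_results} appears to require additional dynamical input (for instance, dynatomic-modular-curve arguments in the style of \cite{Poonen, Morton4, FPS, Stoll3} extended to this family), which remains well beyond current techniques and is in some sense strictly stronger than the Morton--Silverman conjecture restricted to $f_{d,g}$.
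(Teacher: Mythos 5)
First, a point of orientation: the paper does not prove this statement. It is offered ``on this somewhat limited evidence'' from the computations of Section \ref{sect_experimental_results}, and the paper explicitly leaves open even the special case $z^d+c$ with $d$ even over $\Q$. So there is no proof in the paper to compare yours against; if your argument were complete, it would resolve an open problem. That said, your overall strategy is logically sound and is the natural line of attack: a height bound $C_1(g,D)$ for preperiodic points that is uniform in $d$ and in $K$ with $[K:\Q]=D$, combined with Northcott for points of bounded degree and bounded height, does imply the conjecture. Your toy computation for constant $g$ is correct: there $\Res(x^d+cy^d,\,y^d)$ is a unit and the witnesses have height $h(c)$ independent of $d$, so the bound of Proposition \ref{prop_height_bound} is $O\bigl((h(c)+\log d)/(d-1)\bigr)$ and the scheme goes through.

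The gap is exactly where you flag it, and your proposed fallback does not close it. For the argument to produce a constant rather than a function of $d$, you need $\max_{i,j} h(g_{i,j}) \leq C(g)\cdot d$ --- linear in $d$ with no logarithmic factor --- because Proposition \ref{prop_height_bound} divides by $\deg f - 1 \asymp d$, and any surviving $\log d$ in the height bound is fatal: the number of points of $\P^1(\overline{\Q})$ of degree at most $D$ and height at most $A + B\log d$ grows like a power of $d$, so Northcott would return a count that is polynomial in $d$, not uniform. Generic effective arithmetic Nullstellensatz bounds do not give linearity: already for two binary forms of degree $\delta$, the Sylvester determinant contributes $\log((2\delta)!) \asymp \delta\log\delta$ to the height of the resultant, and the cofactors inherit at least the same growth; after dividing by $d-1$ one is left with $O(h(g)+\log d)$, which is unbounded. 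So the claim that polynomial-in-$d$ witness heights ``still produce a uniform bound'' is false. What is actually required is an explicit construction of the witnesses exploiting the special shape $F_0 = x^dQ+Py^{d+b-a}$, $F_1=Qy^d$ (with $g=p/q$, $P,Q$ the homogenizations), where the resultant is now a genuinely $d$-dependent quantity of height $\asymp d\,h(g)$; this is the entire content of the problem for non-constant $g$ and is not carried out. Two smaller points: finitely many small $d$ for which $\deg f_{d,g} < 2$ must be excluded, or the statement is false outright (e.g.\ $g(z)=-z^2$, $d=2$ makes $f$ constant, so every rational point is preperiodic); and the displayed computation in the proof of Proposition \ref{prop_height_bound} silently absorbs the factor $\Res$ into the $g_{i,j}$, so your height accounting must also control $h(\Res)/(d-1)$ uniformly in $d$.
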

        There are many further questions associated with this conjecture, such as whether $C$ depends only on $\deg{g}$ and not $g$ itself.

\section{Isolated Examples, Running Time, and Next Steps} \label{sect_examples}
    We now give a few interesting isolated examples of the full $\Q$-rational preperiodic structure for maps on $\P^N$. For $\P^1$, most examples are drawn from \cite{Benedetto4, Manes2, Poonen}. For $\P^N$, examples are drawn from \cite{Hutz4} or created from lower dimension examples. The goal was to find a few interesting examples with either long cycles, many connected components, or simply many rational preperiodic points. For $N>1$, it is virtually certain that these examples can be bettered in all three aspects.

    The columns of the chart are
    \begin{itemize}
        \item The coordinates of the morphism $f:\P^N \to \P^N$.
        \item The list of $\Q$-rational cycle lengths.
        \item The number of $\Q$-rational preperiodic points in each connected component, listed in the same order as the cycle lengths.
        \item The total number of $\Q$-rational preperiodic points.
    \end{itemize}

        \renewcommand{\arraystretch}{1.25}
        \begin{equation*}
            \begin{tabular}{|l|l|l|l|}
                \hline
                \multicolumn{4}{|c|}{$\P^1$} \\
                \hline
                map & cycles & \# con. comp. & \# Pre\\
                \hline
                $z^2-1$ & $\{2,1\}$ & $\{3,1\}$ & $4$\\
                \hline
                $z^2 - \frac{7}{4}$ & $\{2,1\}$ & $\{4,1\}$ & 5\\
                \hline
                $\frac{5}{24}z^3 - \frac{53}{24}z+ 1$ &$ \{4,1\}$ & $\{4,1\}$ & 5\\
                \hline
                $z^2 - \frac{3}{4}$ & $\{1,1,1\}$ & $\{2,2,1\}$ & $6$\\
                \hline
                $z^2 -2$ & $\{1,1,1\}$ & $\{3,2,1\}$ & $6$\\
                \hline
                $\frac{1}{12}z^3 - \frac{25}{12}z + 1$  & $\{5,1\}$ & $\{7,1\}$ & $8$\\
                \hline
                $z^2 - \frac{29}{16}$ & $\{3,1\}$ & $\{8,1\}$ & $9$\\
                \hline
                $z^2 - \frac{21}{16}$ & $\{2,1,1,1\}$ & $\{4,2,2,1\}$ & $9$\\
                \hline
                $\frac{4}{30}z^3 - \frac{91}{30}z + 1$ & $\{2,1\}$ & $\{9,1\}$ & $10$\\
                \hline
                $-\frac{5}{4}z + \frac{1}{z}$ & $\{2,1,1,1\}$ & $\{4,2,2,2\}$ & 10\\
                \hline
                $\frac{1}{240}z^3 - \frac{151}{60}z + 1$  & $\{2,2,2,1\}$ & $\{4,4,2,1\}$ & 11\\
                \hline
                $-\frac{3}{2}z^3 + \frac{19}{6}z$  & $\{2,1,1\}$ & $\{10,1,1\}$ & $12$\\
                \hline
                $\frac{7}{24}z - \frac{7}{6z}$  &$ \{4,1\}$ & $\{8,4\}$ & $12$\\
                \hline
            \end{tabular}
        \end{equation*}

        \renewcommand{\arraystretch}{1.25}
        \begin{equation*}
            \begin{tabular}{|l|l|l|l|}
                \hline
                \multicolumn{4}{|c|}{$\P^2$} \\
                \hline
                map & cycles & \# con. comp. & \# Pre\\
                \hline
                $[-\frac{38}{45}x^2 + (2y - \frac{7}{45}z)x + (-\frac{1}{2}y^2 - \frac{1}{2}yz + z^2),$ & $\{9,1\}$ & $\{9,2\}$ & $11$\\
                $-\frac{67}{90}x^2 + (2y + \frac{157}{90}z)x - yz,z^2]$ &&&\\
                \hline
                $[2x^3 - 50xz^2 + 24z^3,$& $\{20,1,1\}$ & $\{28,1,1\}$ & $30$\\
                $5y^3 - 53yz^2 + 24z^3,24z^3]$ &&& \\
                \hline
                $[x^2 - \frac{21}{16}z^2,y^2-2z^2,z^2]$ & $\{2,2,1,1,1,1,1,1,1\}$ & $\{12,8,6,6,4,4,2,1,1\}$ & $44$\\
                \hline
                $[-\frac{3}{2}x^3 + \frac{19}{6}xz^2,$& $\{2,2,2,2,2,2,$ & $\{20,20,20,20,10,10$ & $112$\\
                $\frac{1}{240}y^3 - \frac{151}{60}yz^2 + z^3,z^3]$ & $2,2,2,1,1\}$ & $4,4,2,1,1\}$ & \\
                \hline
            \end{tabular}
        \end{equation*}

        \renewcommand{\arraystretch}{1.25}
        \begin{equation*}
            \begin{tabular}{|l|l|l|l|}
                \hline
                \multicolumn{4}{|c|}{$\P^3$} \\
                \hline
                map & cycles &  \# con. comp. & \# Pre \\
                \hline
                $[-x^3 + \frac{5}{4}xw^2 + w^3,\frac{5}{24}y^3 - \frac{53}{24}yw^2+ w^3$, & $\{60, 1, 1, 1\}$ & $\{84, 1, 1, 1\}$ & $87$\\
                $\frac{1}{12}z^3 - \frac{25}{12}zw^2 + w^3,w^3]$ &&&\\
                \hline
                $[(-y - w)x + (-\frac{13}{30}y^2 + \frac{13}{30}wy + u^2)$, & $\{24,1\}$ & $\{96,1\}$ & $97$\\
                $-\frac{1}{2}x^2 + (-y + \frac{3}{2}w)x + (-\frac{1}{3}y^2 + \frac{4}{3}wy),$ &&&\\
                $-\frac{3}{2}z^2 + \frac{5}{2}zw + w^2,w^2]$&&&\\
                \hline
                $[-\frac{3}{2}x^3 + \frac{19}{6}xz^2$, & $\{2,2,2,2,2,2,2,2,$ & $\{105,105,105,105,75,75,$ & $993$\\
                $\frac{1}{240}y^3 - \frac{151}{60}yz^2 + z^3$, &$2,2,2, 2, 2, 2, 2, 2,$ &  $75,75,45,45,45,45, 21, 21, 15,$ &\\
                $\frac{2}{15}w^3 - \frac{91}{30}wz^2 + z^3,z^3]$ & $2, 2, 1, 1, 1\}$ & $ 15, 9, 9, 1, 1, 1\}$ &\\
                \hline
            \end{tabular}
        \end{equation*}

        \renewcommand{\arraystretch}{1.25}
        \begin{equation*}
            \begin{tabular}{|l|l|l|l|}
                \hline
                \multicolumn{4}{|c|}{$\P^4$} \\
                \hline
                map & cycles & \# con. comp. & \# Pre\\
                \hline
                $[-\frac{38}{45}x^2 + (2y - \frac{7}{45}v)x + (-\frac{1}{2}y^2 - \frac{1}{2}vy + v^2),$ & $\{72,1\}$ & $ \{108,2\}$ & $110$\\
                $-\frac{67}{90}x^2 + (2y + \frac{157}{90}v)x - vy,$&&&\\
                $(-u - v)z + (-\frac{13}{30}u^2 + \frac{13}{30}vu + v^2)$,&&&\\
                $-\frac{1}{2}z^2 + (-u + \frac{3}{2}v)z + (-\frac{1}{3}u^2 + \frac{4}{3}vu),v^2]$&&&\\
                \hline
            \end{tabular}
        \end{equation*}

\subsection{Running time}
    \footnote{All computations were done on OSX 10.8.2 running Sage 5.8 and an Intel i7 3.4Ghz processor and 16 Gb 1333Ghz DDR3 RAM.}
    The degree of the map and the length of the cycles have only a small effect on the running time. The dimension 1 examples above all take less than $1$ second to complete, and 1,000 randomly generated degree $20$ polynomials with coefficients of height at most 1,000 completed in an average of $0.2$ seconds. The dimension 2 examples above completed in $1$-$3$ seconds, and the dimension 3 examples completed in $9$-$20$ seconds. The dimension $4$ example at $680$ seconds is a particularly bad case due to the number and size of the primes needed to reduced the possible period list sufficiently. On the other hand, the simple map
    \begin{align*}
        f&:\P^4 \to \P^4\\
        [x,y,z,u,v] &\mapsto [x^2-2v^2,y^2-v^2,z^2,u^2,v^2]
    \end{align*}
    takes only $10$ seconds to find all 175 rational preperiodic points using the primes $\{2,3,5,7\}$.

    The slowest step in most examples is determining the cycle structure modulo the primes of good reduction. For example, the $\P^4$ example above takes $630$ of the total $680$ seconds to reduce the list of possible periods to $\{1,3,72\}$ using primes $\{13,17,19,23\}$.

    The second limiting factor is the lifting step. If all the eigenvalues of the multiplier matrix are not one, the Hensel lifting from $p^\ell$ to $p^{2\ell}$ is very efficient. However, if even one of the eigenvalues of the multiplier matrix is $1$, then the algorithm tries all possible lifts from $p^{\ell}$ to $p^{\ell +1}$. This happens only for a closed subset of all maps; but for those maps, the running time is significantly worse. For example, the map
    \begin{equation*}
        f(x) = \frac{2x^2 + x}{9x^2 + 2x + 1}
    \end{equation*}
    has the fixed point $0$ with multiplier $1$. Determining the possible periods for primes $p \leq 19$ takes less than $1$ second, but it takes $21$ seconds to determine the preperiodic points.

\subsection{Next steps}
    There are several small improvements that could be made to the implementation, but the main improvement would be to implement the determination of the cycle structure modulo primes in a lower level language such as C.

    While all the results in this article either are valid over number fields or there exist appropriate generalizations for number fields, the author plans to approach the number field problem from a different direction. In a subsequent article, the author will examine the feasibility of reducing the number field case to a problem over $\Q$ through either symmetrization or Weil restriction.


\end{document}